\documentclass[12pt,twoside,reqno]{amsart}
\usepackage[latin1]{inputenc}
\usepackage[italian,english]{babel}
\usepackage{amssymb,latexsym}
\usepackage{amsmath,amsfonts,amsthm}
\usepackage{paralist}
\usepackage{color}

\setlength{\textwidth}{14.00 cm}

\numberwithin{equation}{section}

\pagestyle{plain}


\newcommand{\R}{\mathbb{R}}
\newcommand{\N}{\mathbb{N}}

\newcommand{\Z}{\mathbb{Z}}

\newtheorem{lemma}{Lemma}
\newtheorem{thm}{Theorem}

\newtheorem{dfn}{Definition} 
\theoremstyle{remark}
\newtheorem{rem}{Remark}

\begin{document}
\title{A Fractional Landesman-Lazer type problem set on $\mathbb{R}^{N}$}
\author[Vincenzo Ambrosio]{Vincenzo Ambrosio}

\address{%
Dipartimento di Matematica e Applicazioni\\ Universit\`a degli Studi "Federico II" di Napoli\\
via Cinthia, 80126 Napoli, Italy}

\email{vincenzo.ambrosio2@unina.it}

\keywords{Fractional Laplacian, Struwe's monotonicity-trick, Positive solutions}

\date{}

\maketitle

\begin{abstract}
\noindent
By using the abstract version of Struwe's monotonicity-trick we prove the existence of a positive solution to the problem
\begin{align*}
\left\{
\begin{array}{ll}
(-\Delta)^{s} u+Ku = f(x,u) \mbox{ in } \mathbb{R}^{N} \\
u\in H^{s}(\mathbb{R}^{N}), K>0
\end{array}
\right.  
\end{align*} 
where $f(x,t):\mathbb{R}^{N}\times \mathbb{R} \rightarrow \mathbb{R}$ is a Caratheodory function, $1$-periodic in $x$ and does not satisfy the Ambrosetti-Rabinowitz condition.
\end{abstract}

\section{Introduction}
In this paper we are concerned with the existence of positive solutions of the following problem
\begin{align}\label{P}
\left\{
\begin{array}{ll}
(-\Delta)^{s} u+Ku = f(x,u) \mbox{ in } \mathbb{R}^{N} \\
u\in H^{s}(\mathbb{R}^{N}), K>0
\end{array}
\right.  
\end{align}
where $s\in (0,1)$, $N>2s$ and $f:\mathbb{R}^{N}\times \mathbb{R} \rightarrow \mathbb{R}$ is a  Caratheodory function satisfying the following hypotheses: 
\begin{compactenum}[(H1)]
\item  $f(x,t)=0$ for any $t<0$ and a.e. $x\in \mathbb{R}^{N}$, $f(\cdot,t)\in L^{\infty}(\mathbb{R}^{N})$ and $f(\cdot,t)$ is $1$-periodic in $x_{i}$, $i=1,\dots, N$;
\item There exists $p\in (2,\frac{2N}{N-2s})$ such that $\displaystyle{\lim_{t\rightarrow \infty} \frac{f(x,t)}{t^{p-1}}=0}$ uniformly in $x\in \mathbb{R}^{N}$;
\item $\displaystyle{\lim_{t\rightarrow 0} \frac{f(x,t)}{t}=0}$ uniformly in $x\in \mathbb{R}^{N}$;
\item There exists $a\in (0,\infty]$ such that $\displaystyle{\lim_{t\rightarrow \infty} \frac{f(x,t)}{t}=a}$ uniformly in $x\in \mathbb{R}^{N}$.
\end{compactenum}
Let $G:\mathbb{R}^{N}\times \mathbb{R}_{+} \rightarrow \mathbb{R}$ be a function defined by setting
$$
G(x,t)=\frac{1}{2}f(x,t)t-F(x,t) \mbox{ where } F(x,t)=\int_{0}^{t} f(x,\tau) d\tau.
$$
Then, we also assume
\begin{compactenum}[(J1)]
\item $G(x,t)\geq 0$ for any $t\geq 0$, a.e. $x\in \mathbb{R}^{N}$ and there is $\delta>0$ such that
$$
f(x,t)t^{-1}\geq K-\delta \Rightarrow G(x,t)\geq \delta
$$
\item There exists $D\in [1,\infty)$ such that, a.e. $x\in \mathbb{R}^{N}$
$$
G(x,\bar{t})\leq D\,G(x,t) \mbox{ for any } 0<\bar{t}\leq t.
$$
\end{compactenum} 
\noindent
Here $(-\Delta)^{s}$ denotes the fractional Laplacian defined through the Fourier transform in the following way
$$
\mathcal{F}(-\Delta)^{s}u(k)=|k|^{2s}\mathcal{F}u(k) \quad (k\in \mathbb{R}^{N})
$$
for any $u\in \mathcal{S}(\mathbb{R}^{N})$.
It can be also computed by the following singular integral
$$
(-\Delta)^{s}u(x)=c_{N,s} \,P.V. \int_{\mathbb{R}^{N}} \frac{u(x)-u(y)}{|x-y|^{N+2s}} dy   \quad (x\in \mathbb{R}^{N}),
$$
where P.V. stands for the Cauchy principal value and $c_{N,s}$ is a normalization constant; see \cite{DPV} for more details.

\smallskip

\noindent
Equation (\ref{P}) appears in the study of the fractional Schr\"odinger equation
\begin{equation}\label{FSE}
\imath \frac{\partial \psi}{\partial t}+(-\Delta)^{s}\psi+K\psi=F(x,\psi) \mbox{ in } \mathbb{R}^{N}
\end{equation}
when looking for standing waves solutions that have the form $\psi(x,t)=e^{\imath \omega t}u(x)$ where $\omega\in \mathbb{R}$ and $u\in H^{s}(\mathbb{R}^{N})$.
This equation plays an important role in the study of the fractional quantum mechanics; see for example \cite{CW, DPPV, FQT, FL, Laskin3, LY1, Secchi}. For the interested reader we also suggest the Appendix A of \cite{DDDV} where a detailed physical description of (\ref{FSE}) is given. \\
When $s=1$, (\ref{FSE}) reduces to the classical nonlinear Schr\"odinger equation
\begin{equation}\label{SE}
-\Delta u+Ku = f(x,u) \mbox{ in } \mathbb{R}^{N}
\end{equation}
which has been extensively investigated in these last twenty years.
In the celebrated paper \cite{BL1}, Berestycki and Lions proved the existence of ground states to (\ref{SE}) (and a multiplicity result in \cite{BL2}), when $f$ is autonomous and satisfies the assumptions $(H1)-(H4)$. They work in the radially symmetric Sobolev space $H^{1}_{r}(\R^{N})$ of $H^{1}(\R^{N})$ and use a Lagrange multiplier procedure which is essentially based on the Pohozaev's Identity \cite{Poh} for (\ref{SE}).
When  $f$ is not autonomous, Pohozaev's identity provides no informations, so in many works concerning (\ref{SE}), it is usually assumed that $f(x,u)$ satisfies the Ambrosetti-Rabinowitz condition \cite{AR}, i.e.
\begin{equation}
\exists \mu>2, R>0 : 0< \mu F(x,t) \leq f(x,t)t, \, \forall |t|\geq R, \mbox{ a.e. } x\in \mathbb{R}^{N}.   \tag{AR}
\end{equation}
This condition is very crucial in applying the critical point theory, because, roughly speaking, it ensures the boundedness of the Palais-Smale sequences of the energy functional
$$
J(u)=\int_{\mathbb{R}^{N}} \frac{1}{2}[|\nabla u|^{2}+Ku^{2}] dx-\int_{\mathbb{R}^{N}} F(x,u) dx \quad u\in H^{1}(\mathbb{R}^{N})
$$ 
associated to the problem (\ref{SE}).
However, although (AR) is a quite natural condition, it is somewhat restrictive and eliminates many nonlinearities.
In fact, (AR) implies that for some $A,B>0$,
\begin{equation}\label{cAR}
F(x,t) \geq A |t|^{\mu} -B \mbox{ for any } t\in \mathbb{R}.
\end{equation}
Hence, for example,  the function 
\begin{equation}\label{examplef}
f(x,t)=t\log(1+|t|),
\end{equation}
does not satisfy the (AR)-condition.
For this reason, many authors studied (\ref{SE}) trying to drop the condition (AR).
One of the first result in this direction was due to Jeanjean \cite{J}.
To overcome the difficulty that the Palais-Smale sequences of $J$ may be unbounded, he developed an abstract version of  the monotonicity trick due to Struwe \cite{AS, Struwe} for functionals depending on a real parameter. \\
Here we recall his result:
\begin{thm}\cite{J}\label{absJ}
Let $(X, ||\cdot||)$ be a Banach space and $\Lambda\subset \mathbb{R}_{+}$ an interval. We consider a family $\{ I_{\lambda}\}_{\lambda\in \Lambda}$ of $C^{1}$-functionals on $X$ of the form
$$
I_{\lambda}(u)=A(u)-\lambda B(u), \quad \forall \lambda\in \Lambda
$$
where $B(u)\geq 0$ $\forall u\in X$ and such that either $A(u)\rightarrow +\infty$ or $B(u)\rightarrow +\infty$ as $||u||\rightarrow \infty$. \\
If there are two points $v_{1},v_{2}\in X$ such that 
$$
c_{\lambda}=\inf_{\gamma \in \Gamma} \max_{t\in [0,1[}I_{\lambda}(\gamma(t))>\max\{I_{\lambda}(v_{1}), I_{\lambda}(v_{2})\} \quad \lambda\in \Lambda,
$$
where
$$
\Gamma=\{\gamma\in C([0,1],X): \gamma(0)=v_{1},  \gamma(1)=v_{2}  \}, 
$$
then, for almost every $\lambda\in \Lambda$, there exists a sequence $\{v_{n}\}\subset X$ such that
\begin{compactenum}[(i)]
\item $\{v_{n}\}$ is bounded;
\item $I_{\lambda}(v_{n})\rightarrow c_{\lambda}$;
\item $I'_{\lambda}(v_{n})\rightarrow 0$ in the dual $X^{-1}$ of $X$.
\end{compactenum}
\end{thm}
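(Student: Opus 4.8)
The plan is to follow Jeanjean's original argument \cite{J}, which rests on the almost-everywhere differentiability of a monotone real function together with a localized quantitative deformation lemma. The first ingredient is the \emph{monotonicity of the minimax level}: since $B(u)\geq 0$, for $\lambda\leq\mu$ in $\Lambda$ one has $I_\mu(u)\leq I_\lambda(u)$ for every $u\in X$, hence $\max_{t}I_\mu(\gamma(t))\leq\max_{t}I_\lambda(\gamma(t))$ along each $\gamma\in\Gamma$, and therefore $c_\mu\leq c_\lambda$. Thus $\lambda\mapsto c_\lambda$ is non-increasing on $\Lambda$, so it is differentiable on a full-measure subset $\Lambda'$ of the interior of $\Lambda$. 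Fix $\lambda\in\Lambda'$, let $c'_\lambda$ be the derivative there, and set $K(\lambda):=-c'_\lambda\geq 0$ (non-negativity comes from monotonicity); the goal is a bounded Palais--Smale sequence for $I_\lambda$ at level $c_\lambda$.

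The core step is to exhibit \emph{almost optimal paths whose ``high'' part is uniformly bounded in $X$}. Choose $\lambda_n\in\Lambda$ with $\lambda_n\uparrow\lambda$, put $\beta_n:=\lambda-\lambda_n$, and take $\lambda_n$ so close to $\lambda$ that $(c_{\lambda_n}-c_\lambda)/\beta_n\leq K(\lambda)+1$ and $(K(\lambda)+3)\beta_n\leq 1/n$. Using the identity $I_{\lambda_n}-I_\lambda=\beta_n B$, pick $\gamma_n\in\Gamma$ with $\max_{t}I_{\lambda_n}(\gamma_n(t))\leq c_{\lambda_n}+\beta_n$. Then, for every $t$ with $I_\lambda(\gamma_n(t))\geq c_\lambda-\beta_n$,
$$
B(\gamma_n(t))=\frac{I_{\lambda_n}(\gamma_n(t))-I_\lambda(\gamma_n(t))}{\beta_n}\leq\frac{c_{\lambda_n}-c_\lambda}{\beta_n}+2\leq K(\lambda)+3,
$$
and hence $A(\gamma_n(t))=I_{\lambda_n}(\gamma_n(t))+\lambda_n B(\gamma_n(t))$ is bounded above by a constant depending only on $\lambda$ (recall $c_{\lambda_n}$ and $\lambda_n$ stay bounded). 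Since $A(u)\to+\infty$ or $B(u)\to+\infty$ as $\|u\|\to\infty$, the set on which $A$ and $B$ are simultaneously bounded by these constants is bounded, so there is $R=R(\lambda)$ with $\|\gamma_n(t)\|\leq R$ whenever $I_\lambda(\gamma_n(t))\geq c_\lambda-\beta_n$. Moreover $\max_{t}I_\lambda(\gamma_n(t))\leq\max_{t}I_{\lambda_n}(\gamma_n(t))\leq c_{\lambda_n}+\beta_n\leq c_\lambda+\varepsilon_n$, with $\varepsilon_n:=(K(\lambda)+2)\beta_n\to 0$ and $\varepsilon_n\geq\beta_n$.

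The last step is a \emph{deformation argument against $c_\lambda$}. Fix $n$ so large that $\max\{I_\lambda(v_1),I_\lambda(v_2)\}<c_\lambda-2\varepsilon_n$ (possible since that inequality is strict), and suppose, for contradiction, that $\|I'_\lambda(u)\|\geq 8\varepsilon_n$ for every $u$ with $\|u\|\leq R+2$ and $|I_\lambda(u)-c_\lambda|\leq 2\varepsilon_n$. Applying a quantitative deformation lemma to $I_\lambda$ with $S=\{\|u\|\leq R\}$, level $c_\lambda$, and parameters $\varepsilon_n,\delta=1$ (see \cite{Struwe} or \cite{J}) yields $\eta\in C([0,1]\times X,X)$ that fixes every point outside $I_\lambda^{-1}([c_\lambda-2\varepsilon_n,c_\lambda+2\varepsilon_n])\cap\{\|u\|\leq R+2\}$, never increases $I_\lambda$, and maps $\{I_\lambda\leq c_\lambda+\varepsilon_n\}\cap S$ into $\{I_\lambda\leq c_\lambda-\varepsilon_n\}$. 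The path $\tilde\gamma_n:=\eta(1,\gamma_n(\cdot))$ then lies in $\Gamma$ (its endpoints sit below $c_\lambda-2\varepsilon_n$, so are unmoved), and for each $t$ either $\|\gamma_n(t)\|\leq R$, whence $I_\lambda(\tilde\gamma_n(t))\leq c_\lambda-\varepsilon_n$, or $\|\gamma_n(t)\|>R$, whence the core step forces $I_\lambda(\gamma_n(t))<c_\lambda-\beta_n$ and so $I_\lambda(\tilde\gamma_n(t))<c_\lambda-\beta_n$. Since $\varepsilon_n\geq\beta_n$, we get $\max_{t}I_\lambda(\tilde\gamma_n(t))\leq c_\lambda-\beta_n<c_\lambda$, contradicting the definition of $c_\lambda$. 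Hence, for every large $n$, some $v_n$ satisfies $\|v_n\|\leq R+2$, $|I_\lambda(v_n)-c_\lambda|\leq 2\varepsilon_n$, and $\|I'_\lambda(v_n)\|<8\varepsilon_n$; letting $n\to\infty$ gives the required bounded sequence with $I_\lambda(v_n)\to c_\lambda$ and $I'_\lambda(v_n)\to 0$.

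I expect the main obstacle to be the core step: one must extract, from the whole \emph{family} of near-optimal paths, a single a priori bound $R(\lambda)$ on the portion of each path lying near the mountain-pass level, and this is exactly where the differentiability of $\lambda\mapsto c_\lambda$ (to control $B$ along the paths) and the coercivity alternative ``$A\to+\infty$ or $B\to+\infty$'' (to turn bounds on $A$ and $B$ into a bound in $X$) must be used jointly. Once the three scales $\beta_n$, $\varepsilon_n$, and $R$ are calibrated so that the deformation in the last step produces a genuine contradiction, the rest is routine bookkeeping.
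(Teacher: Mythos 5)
Your proof is correct: the monotonicity of $\lambda\mapsto c_\lambda$ coming from $B\geq 0$, its almost-everywhere differentiability, the uniform bound $R(\lambda)$ on the portions of almost-optimal paths lying above the level $c_\lambda-\beta_n$ (obtained by combining the difference quotient control of $B$ with the coercivity alternative on $A$ and $B$), and the localized quantitative deformation argument are exactly the ingredients of Jeanjean's original proof in \cite{J}. Note that the paper under review does not prove Theorem \ref{absJ} at all --- it is quoted from \cite{J} --- so your reconstruction should be compared with the cited source, with which it agrees in both structure and calibration of the scales $\beta_n$, $\varepsilon_n$, $R$.
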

This principle says, essentially, that given a family of $C^{1}$ functionals $I_{\lambda}$ satisfying a uniform Mountain Pass geometry and monotonically depending on the parameter $\lambda$, then the almost everywhere differentiability of the Mountain Pass value $c_{\lambda}$ induces the existence of a bounded Palais-Smale sequence for $I_{\lambda}$ for almost every $\lambda$ in the interval $\Lambda$ where the family is defined. \\
As application of Theorem \ref{absJ}, Jeanjean obtained the following existence result for the problem (\ref{SE}):
\begin{thm}\cite{J}\label{thmJ}
 Let $N>2$. Assume that $f$ satisfies the assumptions $(H1)-(H4)$ with $p\in (2, \frac{2N}{N-2})$. If $(J1)$ holds with $a<\infty$ in $(H4)$, then, if $K\in (0,a)$, there exists a non-trivial positive solution of (\ref{SE}).
If $(J2)$ holds with $a=\infty$ in $(H4)$, then there exists a non-trivial positive solution of (\ref{SE}).
\end{thm}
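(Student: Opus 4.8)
The plan is to apply Theorem~\ref{absJ} to the family $I_{\lambda}=A-\lambda B$ on $X=H^{1}(\mathbb{R}^{N})$, where $A(u)=\frac{1}{2}\int_{\mathbb{R}^{N}}(|\nabla u|^{2}+Ku^{2})\,dx$ and $B(u)=\int_{\mathbb{R}^{N}}F(x,u)\,dx$, over an interval $\Lambda=[\bar{\lambda},1]$ with $\bar{\lambda}<1$ to be chosen close to $1$; note $J=I_{1}$. Under (H1)--(H3) each $I_{\lambda}$ is $C^{1}$, and $A(u)\to\infty$ as $\|u\|_{H^{1}}\to\infty$. The requirement $B\ge0$ is automatic: $G\ge0$ holds (in the case $a=\infty$ this follows from (J2) and (H3) by letting $\bar{t}\to0^{+}$, since $G(x,0^{+})=0$), hence $t\mapsto F(x,t)/t^{2}$ is nondecreasing on $(0,\infty)$, and since $F(x,t)/t^{2}\to0$ as $t\to0^{+}$ by (H3) while $F(x,t)=0$ for $t\le0$ by (H1), we get $F\ge0$. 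For the \emph{uniform mountain--pass geometry} I take $v_{1}=0$: by (H2)--(H3), $F(x,t)\le\varepsilon t^{2}+C_{\varepsilon}|t|^{p}$, so the Sobolev embedding $H^{1}(\mathbb{R}^{N})\hookrightarrow L^{p}(\mathbb{R}^{N})$ (legitimate since $2<p<\frac{2N}{N-2}$) and a small $\varepsilon$ give $I_{\lambda}(u)\ge\alpha>0$ on a fixed sphere $\|u\|_{H^{1}}=\rho$, uniformly for $\lambda\le1$. For $v_{2}$ I pick a nonnegative, widely spread $\phi\in C_{c}^{\infty}(\mathbb{R}^{N})$ whose Rayleigh quotient $\|\phi\|_{H^{1}}^{2}/\|\phi\|_{2}^{2}$ is strictly below $a$ (possible because $K<a$; no restriction is needed when $a=\infty$); then (H4) forces $I_{\bar{\lambda}}(t\phi)\to-\infty$ as $t\to+\infty$ provided $\bar{\lambda}$ is close enough to $1$, and I fix $v_{2}=t_{0}\phi$ with $\|v_{2}\|_{H^{1}}>\rho$; since $\lambda\mapsto I_{\lambda}(v_{2})$ is nonincreasing, $I_{\lambda}(v_{2})\le I_{\bar{\lambda}}(v_{2})\le0$ for all $\lambda\in\Lambda$. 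Hence $c_{\lambda}\ge\alpha>0=\max\{I_{\lambda}(v_{1}),I_{\lambda}(v_{2})\}$, and Theorem~\ref{absJ} furnishes, for a.e.\ $\lambda\in\Lambda$, a bounded Palais--Smale sequence for $I_{\lambda}$ at level $c_{\lambda}$.

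Next, for each such $\lambda$ I turn this bounded Palais--Smale sequence $\{v_{n}\}$ into a genuine nontrivial nonnegative critical point $u_{\lambda}$ of $I_{\lambda}$, the loss of compactness on $\mathbb{R}^{N}$ being handled by the $\mathbb{Z}^{N}$--periodicity in (H1). If $\{v_{n}\}$ vanished in Lions' sense, then by (H2)--(H3) both $\int_{\mathbb{R}^{N}}F(x,v_{n})\,dx$ and $\int_{\mathbb{R}^{N}}f(x,v_{n})v_{n}\,dx$ would tend to $0$, forcing $c_{\lambda}=0$, a contradiction; hence, after a translation by a vector of $\mathbb{Z}^{N}$ (which leaves $I_{\lambda}$ and $I_{\lambda}'$ invariant, by periodicity of $f$), $v_{n}\rightharpoonup u_{\lambda}$ in $H^{1}(\mathbb{R}^{N})$ and in $L^{2}_{\mathrm{loc}}$, with $u_{\lambda}\ne0$ and $I_{\lambda}'(u_{\lambda})=0$. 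Testing with the negative part of $u_{\lambda}$ and using $f(x,\cdot)\equiv0$ on $(-\infty,0)$ yields $u_{\lambda}\ge0$; and since $I_{\lambda}'(u_{\lambda})=0$ one has $I_{\lambda}(u_{\lambda})=\lambda\int_{\mathbb{R}^{N}}G(x,u_{\lambda})\,dx$, while Fatou's lemma (recall $G\ge0$) gives $\int_{\mathbb{R}^{N}}G(x,u_{\lambda})\,dx\le\liminf_{n}\int_{\mathbb{R}^{N}}G(x,v_{n})\,dx=c_{\lambda}/\lambda$, so $0\le I_{\lambda}(u_{\lambda})\le c_{\lambda}\le c_{\bar{\lambda}}$ (the map $\lambda\mapsto c_{\lambda}$ being nonincreasing) and $\int_{\mathbb{R}^{N}}G(x,u_{\lambda})\,dx\le c_{\bar{\lambda}}/\bar{\lambda}$ uniformly in $\lambda$. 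Finally, the uniform mountain--pass geometry near $0$ shows $I_{\lambda}$ has no nontrivial critical point in a fixed small ball, so $\|u_{\lambda}\|_{H^{1}}\ge r_{0}>0$ uniformly in $\lambda$.

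Now choose $\lambda_{n}\nearrow1$ in $\Lambda$ for which $u_{n}:=u_{\lambda_{n}}$ exists. The hard part will be to prove that $\{u_{n}\}$ is bounded in $H^{1}(\mathbb{R}^{N})$ --- exactly the point where (J1) (for $a<\infty$) and (J2) (for $a=\infty$) enter. Suppose $\|u_{n}\|_{H^{1}}\to\infty$ and set $w_{n}=u_{n}/\|u_{n}\|_{H^{1}}$. When $a<\infty$: dividing $\|u_{n}\|_{H^{1}}^{2}=\lambda_{n}\int_{\mathbb{R}^{N}}f(x,u_{n})u_{n}\,dx$ by $\|u_{n}\|_{H^{1}}^{2}$ and splitting the integral according to whether $f(x,u_{n})u_{n}^{-1}\ge K-\delta$ (there $G(x,u_{n})\ge\delta$, so this set has uniformly bounded measure because $\int_{\mathbb{R}^{N}}G(x,u_{n})\,dx$ is bounded) or $f(x,u_{n})u_{n}^{-1}<K-\delta$, one finds that a uniformly positive portion of the $L^{2}$--mass of $w_{n}$ sits on a set of bounded measure; a covering argument together with Lions' lemma then rules out vanishing of $\{w_{n}\}$, so after a $\mathbb{Z}^{N}$--translation $w_{n}\rightharpoonup w\ne0$. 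On $\{w\ne0\}$ one has $u_{n}\to+\infty$ a.e., hence $f(x,u_{n})u_{n}^{-1}\to a$ there; dividing the equation for $u_{n}$ by $\|u_{n}\|_{H^{1}}$ and letting $n\to\infty$ gives $-\Delta w=(a-K)w$ with $0\ne w\in H^{1}(\mathbb{R}^{N})$, $w\ge0$, which is impossible since $-\Delta$ has no eigenfunctions in $L^{2}(\mathbb{R}^{N})$ (here $K<a$ is essential). When $a=\infty$, the same scheme applies, now using (J2) and the superquadratic growth of $F$ forced by (H4) to contradict the uniform bound $\lambda_{n}\int_{\mathbb{R}^{N}}G(x,u_{n})\,dx=I_{\lambda_{n}}(u_{n})\le c_{\bar{\lambda}}$. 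Once $\{u_{n}\}$ is bounded, $I_{1}'(u_{n})=I_{\lambda_{n}}'(u_{n})-(1-\lambda_{n})B'(u_{n})\to0$ in the dual of $H^{1}(\mathbb{R}^{N})$ (as $B'$ sends bounded sets to bounded sets, by (H2)--(H3)) and $I_{1}(u_{n})$ stays bounded, while $\{u_{n}\}$ cannot vanish (else $\|u_{n}\|_{H^{1}}^{2}=\lambda_{n}\int_{\mathbb{R}^{N}}f(x,u_{n})u_{n}\,dx\to0$, against $\|u_{n}\|_{H^{1}}\ge r_{0}$); so $\{u_{n}\}$ is a non-vanishing bounded Palais--Smale sequence for $J=I_{1}$, and the periodicity/concentration--compactness argument of the previous step produces a nontrivial $u\ge0$ with $J'(u)=0$, i.e.\ a nontrivial nonnegative weak solution of (\ref{SE}). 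Elliptic regularity together with $f(x,u)\ge0$ and the strong maximum principle then give $u>0$ on $\mathbb{R}^{N}$, completing the proof.
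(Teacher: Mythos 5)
Your overall scheme is the same as the paper's (and Jeanjean's): the monotonicity trick applied to $I_{\lambda}(u)=\tfrac12\|u\|^{2}-\lambda\int F(x,u)\,dx$ with a uniform mountain-pass geometry, non-vanishing restored by $\mathbb{Z}^{N}$-translations, critical points $u_{\lambda_{n}}$ for a sequence $\lambda_{n}\to1$, the uniform bound $\int_{\mathbb{R}^{N}}G(x,u_{n})\,dx\le C$, and boundedness of $\{u_{n}\}$ proved by contradiction through $w_{n}=u_{n}/\|u_{n}\|_{H^{1}}$. Working on $\Lambda=[\bar\lambda,1]$ with $\lambda_{n}\nearrow1$ instead of $[1,2]$ with $\lambda_{n}\searrow1$ is immaterial, and your direct exclusion of vanishing of $\{u_{n}\}$ via $\|u_{n}\|^{2}=\lambda_{n}\int f(x,u_{n})u_{n}\,dx$ and the lower bound $\|u_{n}\|\ge r_{0}$ is a slight (correct) simplification of the case distinction the paper makes at the corresponding point. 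The $a<\infty$/(J1) half is essentially complete: the splitting by the set where $f(x,u_{n})/u_{n}\ge K-\delta$ (whose measure is controlled by $\int G$), the exclusion of vanishing, and the limiting eigenvalue equation $-\Delta w=(a-K)w$ with no $L^{2}$ eigenfunctions is exactly the argument used in the paper, merely reorganized.

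The genuine gap is the $a=\infty$/(J2) case, which you dispose of with ``the same scheme applies''. It does not. First, your vanishing-exclusion for $a<\infty$ relied on the uniform bound $0\le f(x,t)/t\le C$, which is false when $a=\infty$, and (J1) is not available there; second, in the non-vanishing case there is no finite limit equation, and the correct argument (as in Step 1 of the paper's proof, $a=\infty$ subcase) is a Fatou argument: testing the equation divided by $\|u_{n}\|$ with $w$ gives a finite limit $\|w\|^{2}$, while the integrand $\frac{f(x,u_{n})}{u_{n}}w_{n}w\to+\infty$ on the positive-measure set $\{w\neq0\}$. Third, and most importantly, nothing ``contradicts the uniform bound $\lambda_{n}\int G(x,u_{n})\,dx=I_{\lambda_{n}}(u_{n})\le c_{\bar\lambda}$'': that bound is true and is an ingredient, not the target. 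The missing idea is Jeanjean's auxiliary sequence: in the vanishing case take $t_{n}\in[0,1]$ maximizing $t\mapsto I_{\lambda_{n}}(tu_{n})$ and set $z_{n}=t_{n}u_{n}$; comparing with the scaled test functions $\sqrt{4M}\,w_{n}$ (which vanish, so $\int F(x,\sqrt{4M}\,w_{n})\,dx\to0$) shows $I_{\lambda_{n}}(z_{n})\to+\infty$, and since $I'_{\lambda_{n}}(z_{n})z_{n}=0$ at the maximum, $\lambda_{n}\int G(x,z_{n})\,dx=I_{\lambda_{n}}(z_{n})\to+\infty$; only now does (J2) enter, applied pointwise with $0<t_{n}u_{n}\le u_{n}$, to give $\int G(x,z_{n})\,dx\le D\int G(x,u_{n})\,dx\le D\,c_{\bar\lambda}/\bar\lambda$, the desired contradiction. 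Without this construction the hypothesis (J2) is never actually used, so the second assertion of the theorem is not proved by your argument as written.
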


Let us notice that in the above Theorem \ref{thmJ}, the condition (AR) is replaced by $(J1)$ if $a<\infty$ or by $(J2)$ if $a=\infty$.
In fact, taking into account (\ref{cAR}), we can see that when $a<\infty$, (AR) does not hold, while if $a= \infty$ it may happen that (AR) is satisfied but, by using the assumptions on $f$, this is not possible. For example (AR) is not true for the function in (\ref{examplef}), which satisfies $(H1)-(H4)$ and $(J2)$.

In this paper we claim to extend the above Theorem \ref{thmJ} for the nonlocal analogue of problem (\ref{SE}), by replacing the standard Laplacian operator by the fractional Laplacian operator. \\  
Recently, a great attention has been devoted to the study of non-local equations, in particular to the ones driven by the fractional Laplace operator. In fact such operator arises in several fields such as optimization, finance, phase transitions, stratified materials, anomalous diffusion, crystal dislocation, flame propagation, conservation laws, ultra-relativistic limits of quantum mechanics, quasi-geostrophic flows, minimal surfaces and water waves.
The literature is too wide to attempt a reasonable list of references here, so we derive the reader to the work by Di Nezza et al. \cite{DPV}, where a more extensive bibliography and an introduction to the subject are given.
We would just cite the papers "Mountain pass solutions for non-local elliptic operators. J. Math. Anal. Appl. 389 (2012), no. 2, 887-898" \cite{SV1} and "Variational methods for non-local operators of elliptic type. Discrete Contin. Dyn. Syst. 33 (2013), no. 5, 2105-2137" \cite{SV2} by R. Servadei \& E. Valdinoci, which are probably the first results dealing with nonlinear analysis in fractional setting. 

\noindent
Now we state our main result.
\begin{thm}\label{thm1}
Let $s\in (0,1)$ and $N> 2s$. Assume that $(H1)-(H4)$ and $(J1)$ hold with $a<\infty$ in $(H4)$. Then if $K\in (0,a)$ there exists a non-trivial positive solution of (\ref{P}).
Assume that $(H1)-(H4)$ and $(J2)$ hold with $a=\infty$ in $(H4)$. Then there exists a non-trivial positive solution $u$ of (\ref{P}).
\end{thm}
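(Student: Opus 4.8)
The plan is to adapt Jeanjean's strategy from Theorem~\ref{thmJ} to the nonlocal setting, working in the Hilbert space $H^{s}(\R^{N})$ with the inner product $\langle u,v\rangle = \int_{\R^{N}} |k|^{2s}\,\mathcal{F}u\,\overline{\mathcal{F}v}\,dk + K\int_{\R^{N}} uv\,dx$ (equivalently using the Gagliardo seminorm), whose norm we denote $\|\cdot\|$. First I would set up the family of functionals
\[
I_{\lambda}(u)=\frac{1}{2}\|u\|^{2}-\lambda\int_{\R^{N}} F(x,u)\,dx,\qquad \lambda\in\Lambda:=[1/2,1],
\]
and check that it fits the abstract framework of Theorem~\ref{absJ}: each $I_{\lambda}$ is $C^{1}$ by (H1)--(H3) and the fractional Sobolev embedding $H^{s}(\R^{N})\hookrightarrow L^{q}(\R^{N})$ for $q\in[2,2N/(N-2s)]$; the term $B(u)=\int F(x,u)\,dx$ is nonnegative because $f\ge 0$ by (H1) (and $F(x,t)=0$ for $t\le 0$); and $A(u)=\tfrac12\|u\|^{2}\to+\infty$ as $\|u\|\to\infty$.

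Next I would verify the uniform mountain-pass geometry. Using (H3) and (H2) one gets, for every $\varepsilon>0$, an estimate $F(x,t)\le \varepsilon t^{2}+C_{\varepsilon}|t|^{p}$, so for $\lambda\le 1$ there are $\rho,\alpha>0$ independent of $\lambda$ with $I_{\lambda}(u)\ge\alpha$ on $\|u\|=\rho$; take $v_{1}=0$. For the far endpoint I would fix a suitable nonnegative $w\in C_{c}^{\infty}(\R^{N})$ with $w\ge 0$ and use (H4): since $\lim_{t\to\infty}f(x,t)/t=a$ (with $a>K$ in the finite case, or $a=\infty$), one shows $\int F(x,tw)\,dx$ grows faster than $t^{2}$, hence $I_{\lambda}(t_{0}w)<0$ for $t_{0}$ large, uniformly for $\lambda\in\Lambda$; set $v_{2}=t_{0}w$. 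This yields $c_{\lambda}\ge\alpha>0=\max\{I_{\lambda}(v_{1}),I_{\lambda}(v_{2})\}$, so Theorem~\ref{absJ} furnishes, for a.e.\ $\lambda\in\Lambda$, a bounded Palais--Smale sequence $\{v_{n}^{\lambda}\}$ at level $c_{\lambda}$.

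The heart of the argument is then to pass from these bounded $(PS)_{c_{\lambda}}$ sequences, for a sequence $\lambda_{m}\uparrow 1$, to a nontrivial critical point of $I_{1}$. For fixed $\lambda$ I would show the bounded $(PS)$ sequence has a nonzero weak limit: because the problem is autonomous in the $1$-periodic sense (H1), a concentration-compactness / vanishing dichotomy argument applies — if $\{v_{n}^{\lambda}\}$ vanished in the sense $\sup_{y}\int_{B_{1}(y)}|v_{n}^{\lambda}|^{2}\to 0$ then by the nonlocal Lions lemma $v_{n}^{\lambda}\to 0$ in $L^{q}$ for $q\in(2,2N/(N-2s))$, forcing $c_{\lambda}=0$, a contradiction; hence after a translation by a lattice point $z_{n}\in\Z^{N}$ (using periodicity so that $I_{\lambda}$ is invariant) we get a nonzero weak limit $u_{\lambda}$, which is a critical point of $I_{\lambda}$, i.e.\ a weak solution of $(-\Delta)^{s}u+Ku=\lambda f(x,u)$. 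Then, with $\lambda_{m}\uparrow 1$, the condition (J1) (when $a<\infty$) or (J2) (when $a=\infty$) is exactly what gives a uniform bound on $\|u_{\lambda_{m}}\|$: one tests the equation with $u_{\lambda_{m}}$ and with a Pohozaev-type relation, or more simply combines $I_{\lambda_{m}}(u_{\lambda_{m}})=c_{\lambda_{m}}\le c_{1/2}$ with $I'_{\lambda_{m}}(u_{\lambda_{m}})=0$ to write $c_{\lambda_{m}}=\lambda_{m}\int G(x,u_{\lambda_{m}})\,dx$ and then use (J1)/(J2) to control $\int G$ from below by a multiple of $\|u_{\lambda_{m}}\|^{2}$ on the region where $f(x,u)/u$ is close to $K$, ruling out blow-up. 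Passing to the limit $m\to\infty$ along this bounded sequence (again translating by lattice vectors and using the periodic Lions lemma to keep the limit nonzero) produces a nonzero weak solution $u$ of~(\ref{P}). Finally, positivity: since $f(x,t)=0$ for $t<0$, testing the equation for $u$ against $u^{-}=\max\{-u,0\}$ and using the quadratic-form inequality $\langle u, u^{-}\rangle \le -\|u^{-}\|^{2}$ for the fractional Laplacian gives $u^{-}\equiv 0$, so $u\ge 0$, $u\not\equiv 0$; a fractional strong maximum principle (or the fact that $(-\Delta)^{s}u\ge -Ku$ with $u\ge0$) then yields $u>0$ a.e.

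I expect the main obstacle to be the uniform-in-$m$ boundedness of the solutions $u_{\lambda_{m}}$ of the subcritical problems and the verification that the weak limit is nonzero: this is where assumptions (J1) and (J2) must be used in an essential way, and where the nonlocal nature of $(-\Delta)^{s}$ requires care — in particular replacing the usual gradient estimates and the classical Lions concentration-compactness lemma with their fractional counterparts (for which one can invoke the framework in \cite{DPV}, \cite{SV1}, \cite{SV2}), and checking that the splitting/translation arguments respect the Gagliardo seminorm. The remaining steps (mountain-pass geometry, $C^{1}$ regularity, positivity) are routine adaptations of the local case.
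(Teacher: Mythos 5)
Your overall scaffolding (the family $I_{\lambda}$, Jeanjean's Theorem~\ref{absJ}, periodic translations plus the fractional Lions lemma to recover a nonzero weak limit, testing with $u^{-}$ for positivity) matches the paper, but two steps that you treat as routine contain genuine gaps. First, the mountain-pass endpoint in the asymptotically linear case $a<\infty$: since $F(x,t)\leq Ct^{2}$ by (H1), (H3), (H4), the quantity $\int F(x,tw)\,dx$ grows exactly like $t^{2}$ (with limit $\tfrac{a}{2}\int w^{2}$ by dominated convergence), not ``faster than $t^{2}$'' as you claim. Consequently $\lim_{t\to\infty}I_{\lambda}(tw)/t^{2}=\tfrac12[w]^{2}_{H^{s}}+\tfrac{K}{2}\|w\|^{2}_{L^{2}}-\tfrac{\lambda a}{2}\|w\|^{2}_{L^{2}}$, which is negative only if the test function satisfies $[w]^{2}_{H^{s}}<(\lambda a-K)\|w\|_{L^{2}}^{2}$; an arbitrary nonnegative $w\in C_{c}^{\infty}$ will not do. The paper builds such a $w$ explicitly as a scaled Gaussian $w_{\alpha}$ with $[w_{\alpha}]^{2}_{H^{s}}=\alpha^{s}D(N)<a-K$ (Lemma~\ref{lemma3.2}); this is exactly where $K<a$ enters. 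Your choice $\Lambda=[1/2,1]$ aggravates this: for $\lambda<1$ one has $I_{\lambda}\geq I_{1}$, so negativity at the endpoint is not automatic and may fail for $\lambda$ far from $1$ when $a<\infty$ (the paper takes $\Lambda=[1,2]$ with $\lambda_{n}\downarrow 1$ precisely to get $I_{\lambda}(v)\leq I_{1}(v)\leq 0$ for free).

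Second, and more seriously, the uniform boundedness of the approximate solutions $u_{\lambda_{m}}$ --- the heart of the whole proof --- is not actually argued. Your sketch (``use (J1)/(J2) to control $\int G$ from below by a multiple of $\|u_{\lambda_{m}}\|^{2}$ on the region where $f(x,u)/u$ is close to $K$'') misreads the hypotheses: (J1) only gives the constant lower bound $G(x,t)\geq\delta$ where $f(x,t)/t\geq K-\delta$, and (J2) gives an upper comparison $G(x,\bar t)\leq D\,G(x,t)$; neither yields a quadratic lower bound, and if it did, boundedness would be trivial. The paper's argument is a contradiction scheme on the normalization $w_{n}=u_{n}/\|u_{n}\|_{H^{s}}$, split according to whether $w_{n}$ vanishes or not. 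In the nonvanishing case with $a<\infty$ the limit $w\neq 0$ would be an $H^{s}$ eigenfunction of $(-\Delta)^{s}w+Kw=aw$, which is excluded by the fractional Pohozaev identity of \cite{CW}; with $a=\infty$ a Fatou argument gives $+\infty\leq\|w\|^{2}_{H^{s}}$, a contradiction. In the vanishing case with (J1) one shows via \eqref{(3.22)}--\eqref{(3.24)} that the set where $f(x,u_{n})/u_{n}>K-\delta/2$ must have measure blowing up, so $\int G(x,u_{n})\,dx\geq\delta\,|\R^{N}\setminus\Omega_{n}|\to\infty$, contradicting the a priori bound $\int G(x,u_{n})\,dx\leq c_{1}$ coming from $I_{\lambda_{n}}(u_{n})\leq c_{\lambda_{n}}$; with (J2) one uses the maximizers $z_{n}=t_{n}u_{n}$ and the comparison functions $\xi_{n}=\sqrt{4M}\,w_{n}$ to force $\int G(x,z_{n})\,dx\to\infty$ against $\int G(x,z_{n})\,dx\leq D\,c_{1}$. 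None of these mechanisms (eigenvalue nonexistence via Pohozaev, measure blow-up, the $t_{n}$-maximization trick) appear in your plan, so as written the proposal does not close the argument; relatedly, when $\limsup I_{\lambda_{n}}(u_{n})\leq 0$ the nontriviality of the final limit also needs the $z_{n}$ device (Lemma~\ref{lemma3.6}), which your ``translate and apply the Lions lemma'' step glosses over.
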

\begin{rem}
By using similar arguments to those developed in \cite{FQT}, it is possible to prove that $u\in C^{0,\alpha}(\mathbb{R}^{N})$ for some $\alpha \in (0,1)$ and $u(x) \rightarrow 0$ as $|x| \rightarrow \infty$.
\end{rem}

\noindent
To prove Theorem \ref{thm1}, we follow the approach developed in \cite{J}. Several modifications will be necessary to deal with the non-local features of problem (\ref{P}). 

We consider the following family of functionals 
$$
I_{\lambda}(u)=\frac{1}{2}||u||^{2}_{H^{s}(\mathbb{R}^{N})}-\lambda\int_{\mathbb{R}^{N}} F(x,u) dx
$$
with $\lambda\in [1,2]$, and we show that it satisfies the assumptions of Theorem \ref{absJ}. Then, for almost every $\lambda \in [1,2]$, there exists a bounded sequence $\{v_{m}\}\subset H^{s}(\mathbb{R}^{N})$ such that 
$$
I_{\lambda}(v_{m})\rightarrow c_{\lambda} \, \mbox{ and } \, I'_{\lambda}(v_{m}) \rightarrow 0 \mbox{ in } H^{-s}(\mathbb{R}^{N}). 
$$

By using the translational invariance of (\ref{P}) we obtain the existence of a sequence $\{y_{m}\}\subset \mathbb{Z}^{N}$ such that $u_{m}(x):=v(x-y_{m})\rightharpoonup u_{\lambda}\neq 0$ in $H^{s}(\mathbb{R}^{N})$, $I_{\lambda}(u_{\lambda})\leq c_{\lambda}$ and $I'_{\lambda}(u_{\lambda})=0$. 
By the weak maximum principle \cite{CS1} we have $u_{\lambda}\geq 0$ a.e. in $\mathbb{R}^{N}$. 
As a consequence we deduce the existence of a decreasing sequence $\{\lambda_{n}\}\subset [1,2]$ such that $\lambda_{n} \rightarrow 1$ and a sequence $\{u_{n}\}\subset H^{s}(\mathbb{R}^{N})$ such that $u_{n}\neq 0$, $I_{\lambda_{n}}(u_{n})\leq c_{\lambda_{n}}$ and $I'_{\lambda_{n}} (u_{n})=0$. 
We prove that $\{u_{n}\}$ is bounded and we show how this information to allow us to obtain a positive solution $u$ to (\ref{P}). 

\noindent
The paper is organized as follows: in Section $2$ we give a quick review about the fractional Sobolev spaces; in Section $3$ we give the proof of Theorem \ref{thm1}.

\section{Preliminaries}
In this section we collect some preliminaries facts about the fractional Sobolev spaces.
Let $s\in (0,1)$. We define the fractional Sobolev space by setting
$$
H^{s}(\mathbb{R}^{N})=\Bigl\{u\in L^{2}(\mathbb{R}^{N}): [u]_{H^{s}(\mathbb{R}^{N})}:= \sqrt{\iint_{\mathbb{R}^{2N}} \frac{|u(x)-u(y)|^{2}}{|x-y|^{N+2s}} dxdy}<\infty\Bigr\}
$$
which is a Hilbert space endowed with the norm
$$
||u||_{H^{s}(\mathbb{R}^{N})}=\sqrt{||u||^{2}_{L^{2}(\mathbb{R}^{N})}+[u]^{2}_{H^{s}(\mathbb{R}^{N})}}.
$$ 
By using the Plancherel's Theorem we can see \cite{DPV} that 
$$
[u]^{2}_{H^{s}(\mathbb{R}^{N})}=2C(N,s)^{-1} \int_{\mathbb{R}^{N}} |k|^{2s} |\mathcal{F}u(k)|^{2} dk.
$$
where 
$$
C(n,s):=\Bigl(\int_{\mathbb{R}^{N}} \frac{1-\cos x_{1}}{|x|^{N+2s}} dx\Bigr)^{-1}.
$$
We recall the following embedding:
\begin{thm}\label{ce}\cite{DPV}
Let $s\in (0,1)$ and $N>2s$. Then $H^{s}(\mathbb{R}^{N})$ is continuously embedded in $L^{q}(\mathbb{R}^{N})$ for any $q\in [2,\frac{2N}{N-2s}]$ and compactly embedded in $L^{q}_{loc}(\mathbb{R}^{N})$ for any $q\in [2,\frac{2N}{N-2s})$.
\end{thm}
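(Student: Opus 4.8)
\medskip
\noindent\textbf{Proof proposal for Theorem \ref{ce}.}
This is a classical fact (see \cite{DPV}); the plan is as follows. For the continuous embedding I would reduce matters to the two endpoint exponents $q=2$ and $q=2^{*}_{s}:=\frac{2N}{N-2s}$, obtaining every intermediate $q$ by interpolation. The case $q=2$ is immediate from the definition of $\|\cdot\|_{H^{s}(\R^{N})}$. For the critical exponent I would establish the fractional Sobolev inequality
$$
\|u\|_{L^{2^{*}_{s}}(\R^{N})}\le C\,[u]_{H^{s}(\R^{N})},\qquad u\in H^{s}(\R^{N}).
$$
By Plancherel's theorem (recalled above in this section), $[u]^{2}_{H^{s}(\R^{N})}$ equals, up to a constant, $\int_{\R^{N}}|k|^{2s}|\mathcal{F}u(k)|^{2}\,dk=\|(-\Delta)^{s/2}u\|^{2}_{L^{2}(\R^{N})}$. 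Writing $g:=(-\Delta)^{s/2}u$, so that $\mathcal{F}g(k)=|k|^{s}\mathcal{F}u(k)$, the function $u$ is, up to a constant, the Riesz potential $I_{s}g$ (whose Fourier multiplier is $|k|^{-s}$). Since $\frac{1}{2^{*}_{s}}=\frac12-\frac{s}{N}$, the Hardy--Littlewood--Sobolev inequality yields $\|I_{s}g\|_{L^{2^{*}_{s}}(\R^{N})}\le C\|g\|_{L^{2}(\R^{N})}$, which is precisely the asserted inequality. For $q\in(2,2^{*}_{s})$ I would write $\frac1q=\frac{1-\theta}{2}+\frac{\theta}{2^{*}_{s}}$ with $\theta\in(0,1)$ and use $\|u\|_{L^{q}(\R^{N})}\le\|u\|_{L^{2}(\R^{N})}^{1-\theta}\,\|u\|_{L^{2^{*}_{s}}(\R^{N})}^{\theta}\le C\|u\|_{H^{s}(\R^{N})}$, which gives the continuous embedding for all $q\in[2,2^{*}_{s}]$.

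For the local compactness, fix $R>0$ and let $\{u_{m}\}$ be bounded in $H^{s}(\R^{N})$; by reflexivity we may assume, along a subsequence, $u_{m}\rightharpoonup u$ in $H^{s}(\R^{N})$. The key step is the translation estimate: for every $h\in\R^{N}$,
$$
\int_{\R^{N}}|u_{m}(x+h)-u_{m}(x)|^{2}\,dx\le C\,|h|^{2s}\,[u_{m}]^{2}_{H^{s}(\R^{N})},
$$
which follows on the Fourier side from $|e^{\imath k\cdot h}-1|^{2}\le\min\{4,\,|k|^{2}|h|^{2}\}\le 4\,(|k|\,|h|)^{2s}$, the last inequality holding because $2s\le 2$. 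Hence the restrictions $\{u_{m}|_{B_{R}}\}$ form a bounded subset of $L^{2}(B_{R})$ whose $L^{2}$-translates are equicontinuous uniformly in $m$, so by the Fr\'echet--Kolmogorov compactness criterion this set is precompact in $L^{2}(B_{R})$; passing to a further subsequence, $u_{m}\to u$ in $L^{2}(B_{R})$. Finally, by the continuous embedding $\{u_{m}\}$ is bounded in $L^{2^{*}_{s}}(B_{R})$, so for any $q\in[2,2^{*}_{s})$ the interpolation inequality gives
$$
\|u_{m}-u\|_{L^{q}(B_{R})}\le \|u_{m}-u\|_{L^{2}(B_{R})}^{1-\theta}\,\|u_{m}-u\|_{L^{2^{*}_{s}}(B_{R})}^{\theta}\longrightarrow 0
$$
as $m\to\infty$, which is the claimed compact embedding into $L^{q}_{loc}(\R^{N})$.

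The only genuinely nontrivial ingredient is the endpoint fractional Sobolev inequality; reducing it to the Hardy--Littlewood--Sobolev inequality through the Riesz-potential representation turns it into a standard tool, so I expect no real obstacle, the interpolation steps and the Fr\'echet--Kolmogorov argument being routine. Alternatively, one may derive the endpoint inequality directly from the Gagliardo seminorm, as in \cite{DPV}, avoiding the Fourier transform altogether; on $\R^{N}$ the route above is the shortest.
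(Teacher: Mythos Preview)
The paper does not supply a proof of Theorem~\ref{ce}; it is simply quoted from \cite{DPV} as a known preliminary, so there is no argument in the paper to compare your proposal against. Your outline is correct and standard: the endpoint inequality via the Riesz-potential representation and Hardy--Littlewood--Sobolev, interpolation for intermediate $q$, and the Fr\'echet--Kolmogorov criterion (fed by the Fourier-side translation estimate $|e^{\imath k\cdot h}-1|^{2}\le 4(|k|\,|h|)^{2s}$) for local $L^{2}$-compactness, upgraded to $L^{q}$ by interpolation, all go through as stated. As you yourself remark, \cite{DPV} takes the alternative route of deriving the critical Sobolev inequality directly from the Gagliardo seminorm, avoiding the Fourier transform; either approach is acceptable here.
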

Now we state the following results which we will use later
\begin{lemma}\label{br}\cite{Bre}
Let $\{u_{n}\}$ be a sequence in $L^{q}(\mathbb{R}^{N})$ with $q\in [1,\infty]$ and let $u\in L^{q}(\mathbb{R}^{N})$ be such that $u_{n} \rightarrow u$ in $L^{p}(\mathbb{R}^{N})$. Then there exists a subsequence $\{u_{n_{k}}\}$ and a function $h\in L^{q}(\mathbb{R}^{N})$ such that
\begin{itemize}
\item $ u_{n_{k}} \rightarrow u$ a.e. in $\mathbb{R}^{N}$;
\item $|u_{n_{k}}(x)|\leq h(x) $ a.e. $x\in \mathbb{R}^{N}$ and for any $k\in \mathbb{N}$.
\end{itemize}
\end{lemma}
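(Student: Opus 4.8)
The plan is to run the classical argument behind the Riesz--Fischer theorem: pass to a subsequence that converges so fast that it is dominated by the tail of an absolutely convergent telescoping series.

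First I would use the hypothesis that $u_{n}\to u$ in $L^{q}(\mathbb{R}^{N})$ to select a subsequence $\{u_{n_{k}}\}$ with
$$
\|u_{n_{k+1}}-u_{n_{k}}\|_{L^{q}(\mathbb{R}^{N})}\le 2^{-k}\qquad\text{for every }k\ge 1 .
$$
Define the increasing sequence of non-negative measurable functions
$$
h_{m}:=|u_{n_{1}}|+\sum_{k=1}^{m-1}|u_{n_{k+1}}-u_{n_{k}}|,\qquad h:=\lim_{m\to\infty}h_{m}=|u_{n_{1}}|+\sum_{k=1}^{\infty}|u_{n_{k+1}}-u_{n_{k}}| ,
$$
the last limit understood pointwise in $[0,+\infty]$.

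The main point is to show that $h\in L^{q}(\mathbb{R}^{N})$. By Minkowski's inequality (valid for every $q\in[1,\infty]$),
$$
\|h_{m}\|_{L^{q}(\mathbb{R}^{N})}\le\|u_{n_{1}}\|_{L^{q}(\mathbb{R}^{N})}+\sum_{k=1}^{m-1}2^{-k}\le\|u_{n_{1}}\|_{L^{q}(\mathbb{R}^{N})}+1
$$
uniformly in $m$. For $q<\infty$, applying the monotone convergence theorem to $h_{m}^{q}\uparrow h^{q}$ gives $\|h\|_{L^{q}}\le\|u_{n_{1}}\|_{L^{q}}+1<\infty$; for $q=\infty$ one instead notes that each bound $|u_{n_{k+1}}-u_{n_{k}}|\le 2^{-k}$ holds off a null set and that a countable union of null sets is null, so $h\le\|u_{n_{1}}\|_{\infty}+1$ a.e. In either case $h(x)<\infty$ for a.e.\ $x\in\mathbb{R}^{N}$, and by construction $|u_{n_{k}}(x)|\le h_{k}(x)\le h(x)$ a.e.\ and for every $k$, which is exactly the asserted domination.

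Finally I would identify the a.e.\ limit. For a.e.\ $x$ the series $u_{n_{1}}(x)+\sum_{k\ge1}\bigl(u_{n_{k+1}}(x)-u_{n_{k}}(x)\bigr)$ converges absolutely because $h(x)<\infty$; its $k$-th partial sum is exactly $u_{n_{k}}(x)$, so $u_{n_{k}}(x)$ converges a.e.\ to some measurable function $g$. On the other hand $u_{n_{k}}\to u$ in $L^{q}$, so a further subsequence converges to $u$ a.e.; that subsequence also converges to $g$ a.e., whence $g=u$ a.e.\ in $\mathbb{R}^{N}$. Thus $u_{n_{k}}\to u$ a.e., with the dominating function $h\in L^{q}(\mathbb{R}^{N})$. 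The only step requiring a little care is the uniform treatment of the endpoint $q=\infty$, where Minkowski and monotone convergence are replaced by the elementary remark about countable unions of null sets; the rest is routine.
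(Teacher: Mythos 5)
Your proof is correct and is essentially the classical argument behind the result the paper invokes via the citation \cite{Bre} without reproducing a proof (Brezis, Th\'eor\`eme IV.9): pass to a rapidly convergent subsequence, dominate it by the telescoping series $h$, and treat the endpoint $q=\infty$ by the elementary null-set remark. Two cosmetic points: the hypothesis ``$u_{n}\rightarrow u$ in $L^{p}(\mathbb{R}^{N})$'' in the statement should read $L^{q}(\mathbb{R}^{N})$, as you tacitly assumed; and at the identification step, rather than invoking the fact that $L^{q}$-convergence yields an a.e.\ convergent subsequence (which is itself a weak form of the first bullet being proved), you can stay self-contained by observing that $|u_{n_{k}}-g|\leq 2h$ with $u_{n_{k}}\rightarrow g$ a.e., so dominated convergence (for $q<\infty$; the case $q=\infty$ being immediate) gives $u_{n_{k}}\rightarrow g$ in $L^{q}$ and hence $g=u$ a.e.\ by uniqueness of $L^{q}$-limits.
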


\begin{lemma}\label{fqt}\cite{FQT}
Let $N\geq 2$. Assume that $\{u_{n}\}$ is bounded in $H^{s}(\mathbb{R}^{N})$ and it satisfies
$$
\lim_{n\rightarrow \infty} \sup_{y\in \mathbb{R}^{N}}\int_{B_{R}(y)} |u_{n}(x)|^{2} dx=0 
$$
where $R>0$. Then $u_{n} \rightarrow 0$ in $L^{q}(\mathbb{R}^{N})$ for $q\in (2, \frac{2N}{N-2s})$.
\end{lemma}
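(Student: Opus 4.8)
\emph{Proof plan.} The plan is to adapt to the nonlocal setting the classical vanishing lemma of P.-L.\ Lions, whose proof rests on covering $\mathbb{R}^N$ by balls of a fixed radius and applying, on each ball, an interpolation inequality that trades a power of the (small) local $L^2$-norm against a power of a (bounded) local $H^s$-energy. First I would record the needed local inequality. For a ball $B:=B_R(y)$ put $\|u\|_{H^s(B)}^2:=\|u\|_{L^2(B)}^2+\iint_{B\times B}\frac{|u(x)-u(z)|^2}{|x-z|^{N+2s}}\,dx\,dz$. Since a ball is an extension domain, the fractional Sobolev inequality on $B$ (see \cite{DPV}) yields $\|u\|_{L^{2N/(N-2s)}(B)}\le C\|u\|_{H^s(B)}$ with $C$ depending only on $N,s,R$ --- in particular not on the centre $y$, by translation invariance. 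Interpolating this with the elementary bound $\|u\|_{L^q(B)}\le\|u\|_{L^2(B)}^{1-\sigma}\|u\|_{L^{2N/(N-2s)}(B)}^{\sigma}$ gives, for every $q\in(2,\tfrac{2N}{N-2s})$, an exponent $\sigma=\sigma(q)\in(0,1)$ and a constant $C=C(N,s,R,q)$ with
$$
\|u\|_{L^q(B_R(y))}\ \le\ C\,\|u\|_{L^2(B_R(y))}^{1-\sigma}\,\|u\|_{H^s(B_R(y))}^{\sigma}\qquad\text{for all }u\in H^s(\mathbb{R}^N)\text{ and }y\in\mathbb{R}^N .
$$
A direct computation shows $q\sigma=\tfrac{N(q-2)}{2s}$, so that $q\sigma\ge 2$ exactly when $q\ge 2+\tfrac{4s}{N}$, and $2+\tfrac{4s}{N}<\tfrac{2N}{N-2s}$ because $N>2s$.

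Next I would fix a countable family of balls $\{B_R(y_i)\}_{i\in\mathbb{N}}$ covering $\mathbb{R}^N$ with bounded overlap, i.e.\ each point of $\mathbb{R}^N$ lies in at most $\kappa=\kappa(N)$ of them (take the $y_i$ on a fine lattice). Then $\sum_i\|u\|_{L^2(B_R(y_i))}^2\le\kappa\|u\|_{L^2(\mathbb{R}^N)}^2$ and, since for each pair $(x,z)$ at most $\kappa$ indices $i$ satisfy $x,z\in B_R(y_i)$, also $\sum_i\iint_{B_R(y_i)^2}\frac{|u(x)-u(z)|^2}{|x-z|^{N+2s}}\,dx\,dz\le\kappa[u]_{H^s(\mathbb{R}^N)}^2$; hence $\sum_i\|u\|_{H^s(B_R(y_i))}^2\le\kappa\|u\|_{H^s(\mathbb{R}^N)}^2$, while $\|u\|_{H^s(B_R(y_i))}\le\|u\|_{H^s(\mathbb{R}^N)}$ for every $i$. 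Set $M:=\sup_n\|u_n\|_{H^s(\mathbb{R}^N)}<\infty$. For $q\in[2+\tfrac{4s}{N},\tfrac{2N}{N-2s})$ one has $q\sigma\ge 2$, so $\|u_n\|_{H^s(B_R(y_i))}^{q\sigma}\le M^{q\sigma-2}\|u_n\|_{H^s(B_R(y_i))}^2$, and summing the local inequality displayed above over the covering gives
\begin{align*}
\|u_n\|_{L^q(\mathbb{R}^N)}^q&\le\sum_i\|u_n\|_{L^q(B_R(y_i))}^q\\
&\le C\Bigl(\sup_{z\in\mathbb{R}^N}\int_{B_R(z)}|u_n|^2\,dx\Bigr)^{\frac{q(1-\sigma)}{2}}M^{q\sigma-2}\sum_i\|u_n\|_{H^s(B_R(y_i))}^2\\
&\le C\kappa M^{q\sigma}\Bigl(\sup_{z\in\mathbb{R}^N}\int_{B_R(z)}|u_n|^2\,dx\Bigr)^{\frac{q(1-\sigma)}{2}} .
\end{align*}
Since $q(1-\sigma)>0$, the hypothesis forces $\|u_n\|_{L^q(\mathbb{R}^N)}\to 0$ for every such $q$.

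Finally, for $q\in(2,2+\tfrac{4s}{N})$ I would fix some $q_0\in(2+\tfrac{4s}{N},\tfrac{2N}{N-2s})$; by the previous step $\|u_n\|_{L^{q_0}(\mathbb{R}^N)}\to 0$, and Hölder's inequality gives $\|u_n\|_{L^q(\mathbb{R}^N)}\le\|u_n\|_{L^2(\mathbb{R}^N)}^{1-\tau}\|u_n\|_{L^{q_0}(\mathbb{R}^N)}^{\tau}$ for a suitable $\tau\in(0,1)$, where $\|u_n\|_{L^2(\mathbb{R}^N)}$ is bounded by Theorem \ref{ce}; hence $\|u_n\|_{L^q(\mathbb{R}^N)}\to 0$ as well. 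Together with the previous step this covers the whole range $q\in(2,\tfrac{2N}{N-2s})$.

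The main obstacle is the exponent bookkeeping in the covering estimate: the per-term bound $\|u_n\|_{H^s(B_R(y_i))}^{q\sigma}\le M^{q\sigma-2}\|u_n\|_{H^s(B_R(y_i))}^2$, which is what converts the $H^s$-factor into a quantity summable over the covering, is available only when $q\sigma\ge 2$, and this fails for $q$ close to $2$; that is precisely why one must first establish the lemma for the larger exponents and then descend to the remaining ones by interpolation between $L^2$ and $L^{q_0}$. A secondary, purely technical point is to verify that the constant in the fractional Sobolev inequality on $B_R(y)$ can be taken independent of the centre $y$, which is immediate from the translation invariance of the Gagliardo seminorm.
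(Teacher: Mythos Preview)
The paper does not supply its own proof of this lemma; it is quoted as a preliminary result and attributed to \cite{FQT}. Your argument is correct and follows the standard route for Lions-type vanishing lemmas---local fractional Sobolev embedding on balls with a centre-independent constant, $L^2$--$L^{2N/(N-2s)}$ interpolation, a bounded-overlap covering to sum the local estimates, and a final $L^2$--$L^{q_0}$ interpolation to reach the small exponents $q$ for which $q\sigma<2$---which is also essentially the argument in the cited reference.
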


We conclude this section giving some estimates for the nonlinear term $f$ and its primitive $F$. This part is quite standard and the proof of the following Lemma can be found in \cite{Rab}.
\begin{lemma}
Assume $f:\mathbb{R}^{N}\times \mathbb{R} \rightarrow \mathbb{R}$ is a function satisfying conditions $(H1)-(H3)$.
Then for any $\varepsilon>0$ there exists $C_{\varepsilon}>0$ such that
\begin{equation}\label{(3.1)}
f(x,t)\leq \varepsilon t+C_{\varepsilon}t^{p-1} \mbox{ for } t\geq 0
\end{equation}
and so, as a consequence
\begin{equation}\label{(3.2)}
F(x,t)\leq \frac{\varepsilon}{2} t^{2}+\frac{C_{\varepsilon}}{p}t^{p} \mbox{ for } t\geq 0.
\end{equation}

\end{lemma}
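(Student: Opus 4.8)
The plan is to estimate $f(x,\cdot)$ separately on three regions of $[0,\infty)$ — a right neighbourhood of $0$, a neighbourhood of $+\infty$, and a compact interval in between — using $(H3)$, $(H2)$ and $(H1)$ respectively, and then to glue the three bounds together.

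Fix $\varepsilon>0$. By $(H3)$ the ratio $f(x,t)/t$ tends to $0$ as $t\to 0^{+}$ uniformly in $x$, so there is $\delta=\delta(\varepsilon)>0$ with $f(x,t)\le \varepsilon t$ for all $0\le t\le \delta$ and a.e. $x\in\mathbb{R}^{N}$ (at $t=0$ this reads $f(x,0)\le 0$, which holds since $f(x,0)=0$ by $(H1)$ and the continuity of $t\mapsto f(x,t)$). By $(H2)$ the ratio $f(x,t)/t^{p-1}$ tends to $0$ as $t\to\infty$ uniformly in $x$, so there is $M=M(\varepsilon)\ge \delta$ with $f(x,t)\le \varepsilon t^{p-1}$ for all $t\ge M$ and a.e. $x$. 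It then remains to handle the compact interval $[\delta,M]$: here I would invoke $(H1)$, which gives $f(\cdot,t)\in L^{\infty}(\mathbb{R}^{N})$ for each $t$; together with the $1$-periodicity in $x$ and the continuity in $t$ this yields a finite constant $c:=\sup_{t\in[\delta,M]}\|f(\cdot,t)\|_{L^{\infty}(\mathbb{R}^{N})}$, and since $t\ge\delta$ forces $1\le(t/\delta)^{p-1}$, we get $f(x,t)\le c\le c\,\delta^{-(p-1)}\,t^{p-1}$ on $[\delta,M]$.

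Setting $C_{\varepsilon}:=\max\{\varepsilon,\,c\,\delta^{-(p-1)}\}$, one checks \eqref{(3.1)} region by region: for $0\le t\le\delta$, $f(x,t)\le\varepsilon t$; for $\delta\le t\le M$, $f(x,t)\le C_{\varepsilon}t^{p-1}$; for $t\ge M$, $f(x,t)\le \varepsilon t^{p-1}\le C_{\varepsilon}t^{p-1}$; and in every case both $\varepsilon t$ and $C_{\varepsilon}t^{p-1}$ are nonnegative for $t\ge 0$, so $f(x,t)\le \varepsilon t+C_{\varepsilon}t^{p-1}$. Then \eqref{(3.2)} is immediate by integrating in $\tau$: for $t\ge 0$,
$$F(x,t)=\int_{0}^{t}f(x,\tau)\,d\tau\le\int_{0}^{t}\bigl(\varepsilon\tau+C_{\varepsilon}\tau^{p-1}\bigr)\,d\tau=\frac{\varepsilon}{2}t^{2}+\frac{C_{\varepsilon}}{p}t^{p},$$
the integral being well defined since $(H1)$ and \eqref{(3.1)} make $\tau\mapsto f(x,\tau)$ locally integrable.

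The only delicate step is the middle interval: one has to be sure that the $L^{\infty}$-bounds on the slices $f(\cdot,t)$ remain uniform as $t$ ranges over the compact set $[\delta,M]$. This is precisely the role of $(H1)$ in the non-autonomous periodic setting, and it plays the part that "a continuous function on a compact set is bounded" plays in the autonomous argument of \cite{Rab}; all other steps are routine applications of $(H2)$, $(H3)$ and an integration.
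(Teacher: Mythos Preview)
The paper does not give its own proof; it simply notes that the argument ``can be found in \cite{Rab}''. Your three-region decomposition is precisely the standard argument one finds in that reference, so there is no alternative approach to compare.

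That said, the step you flag as delicate is more than delicate: hypothesis $(H1)$ gives $f(\cdot,t)\in L^{\infty}$ only slice by slice, and Carath\'eodory continuity in $t$ together with $1$-periodicity in $x$ does \emph{not} by itself force $\sup_{t\in[\delta,M]}\|f(\cdot,t)\|_{L^{\infty}}<\infty$. For instance, partition $[0,1)$ into measurable sets $A_{n}$, take continuous bumps $\rho_{n}$ with pairwise disjoint supports contained in $(1,2)$ and $\max\rho_{n}=1$, and set $f(x,t)=n\,\rho_{n}(t)$ for $x\in A_{n}$, extended $1$-periodically in $x$ and by $0$ for $t\notin(1,2)$. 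This $f$ is Carath\'eodory and satisfies $(H1)$--$(H3)$, yet \eqref{(3.1)} fails because $\sup_{t\ge 0}f(x,t)=n$ for $x\in A_{n}$. In Rabinowitz's setting the lemma is immediate because a global subcritical growth bound $|f(x,t)|\le C(1+|t|^{p-1})$ is \emph{assumed}, not derived; the present paper, following \cite{J}, tacitly relies on the same. Your proof is therefore at least as complete as what the paper itself provides, and your instinct about where the difficulty lies is exactly right---but the appeal to $(H1)$ alone does not close the gap, and an explicit growth hypothesis (or a joint continuity assumption on $f$) is what is really needed.
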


\section{Positive solution of (\ref{P})}
In this section we give the proof of Theorem \ref{thm1}.
Firstly we recall the definition of weak solution to (\ref{P}).
\begin{dfn}
We say that $u\in H^{s}(\mathbb{R}^{N})$ is a weak solution to (\ref{P}) if 
\begin{equation}
\iint_{\mathbb{R}^{2N}}\frac{(u(x)-u(y))}{|x-y|^{N+2s}}(\varphi(x)-\varphi(y))dxdy+\int_{\mathbb{R}^{N}}Ku\varphi dx=\int_{\mathbb{R}^{N}}f(x,u)\varphi dx
\end{equation}
for any $\varphi \in H^{s}(\mathbb{R}^{N})$. 
\end{dfn}

Let us consider the functional 
\begin{equation*}\label{functional}
I(u)=\frac{1}{2}||u||^{2}_{H^{s}(\mathbb{R}^{N})}-\int_{\mathbb{R}^{N}} F(x,u) dx
\end{equation*}
for $u\in H^{s}(\mathbb{R}^{N})$.  Here we use the notation
$$
||u||^{2}_{H^{s}(\mathbb{R}^{N})}:=[u]^{2}_{H^{s}(\mathbb{R}^{N})}+K||u||^{2}_{L^{2}(\mathbb{R}^{N})}
$$
which is equivalent to the standard norm in $H^{s}(\mathbb{R}^{N})$ (defined in Section $2$) since $K>0$.
Then it is clear that $I$ is well defined, $I(0)=0$, $I\in C^{1}(H^{s}(\mathbb{R}^{N}), \mathbb{R})$ and the critical points of $I$ are weak solutions to (\ref{P}).

We begin proving that $I$ has a Mountain-Pass geometry:
\begin{lemma}\label{lemma3.1}
Assume that $(H1)-(H3)$ hold. Then 
$$
I(u)=\frac{1}{2}||u||^{2}_{H^{s}(\mathbb{R}^{N})}+o(||u||^{2}_{H^{s}(\mathbb{R}^{N})})\quad \mbox{ as } ||u||_{H^{s}(\mathbb{R}^{N})}\rightarrow 0.
$$
\end{lemma}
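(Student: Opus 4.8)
The plan is to show that the nonlinear part of $I$ is negligible compared to $\|u\|_{H^s(\mathbb{R}^N)}^2$ as the norm tends to zero, which by definition of $I$ is exactly the claimed asymptotic. Concretely, it suffices to prove that
$$
\int_{\mathbb{R}^N} F(x,u)\,dx = o(\|u\|_{H^s(\mathbb{R}^N)}^2) \quad \text{as } \|u\|_{H^s(\mathbb{R}^N)} \to 0.
$$

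First I would invoke the pointwise estimate \eqref{(3.2)} from the last Lemma of Section~2: for any $\varepsilon > 0$ there is $C_\varepsilon > 0$ with $F(x,t) \le \frac{\varepsilon}{2} t^2 + \frac{C_\varepsilon}{p} t^p$ for $t \ge 0$; combined with $(H1)$ (so $F(x,t) = 0$ for $t < 0$) this gives $|F(x,u(x))| \le \frac{\varepsilon}{2} |u(x)|^2 + \frac{C_\varepsilon}{p} |u(x)|^p$ for a.e.\ $x$. Integrating,
$$
\left| \int_{\mathbb{R}^N} F(x,u)\,dx \right| \le \frac{\varepsilon}{2} \|u\|_{L^2(\mathbb{R}^N)}^2 + \frac{C_\varepsilon}{p} \|u\|_{L^p(\mathbb{R}^N)}^p.
$$
Next I would use the continuous embedding $H^s(\mathbb{R}^N) \hookrightarrow L^q(\mathbb{R}^N)$ from Theorem~\ref{ce}, valid for $q = 2$ and for $q = p \in (2, \frac{2N}{N-2s})$ by $(H2)$, to bound both terms: there is $C > 0$ such that $\|u\|_{L^2(\mathbb{R}^N)}^2 \le C \|u\|_{H^s(\mathbb{R}^N)}^2$ and $\|u\|_{L^p(\mathbb{R}^N)}^p \le C \|u\|_{H^s(\mathbb{R}^N)}^p$. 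Hence
$$
\left| \int_{\mathbb{R}^N} F(x,u)\,dx \right| \le \frac{C\varepsilon}{2} \|u\|_{H^s(\mathbb{R}^N)}^2 + \frac{C\, C_\varepsilon}{p} \|u\|_{H^s(\mathbb{R}^N)}^p.
$$

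Finally, dividing by $\|u\|_{H^s(\mathbb{R}^N)}^2$ and using $p > 2$, the second term is $O(\|u\|_{H^s(\mathbb{R}^N)}^{p-2}) \to 0$, so $\limsup_{\|u\| \to 0} \|u\|^{-2} |\int F(x,u)\,dx| \le \frac{C\varepsilon}{2}$; since $\varepsilon > 0$ is arbitrary this limsup is $0$, giving the $o(\|u\|^2)$ claim and therefore the Lemma. There is no real obstacle here: the only mild subtlety is to make sure the two-term splitting with an arbitrary $\varepsilon$ is handled in the right order (fix $\varepsilon$, then let the norm go to zero, then let $\varepsilon \to 0$), and to note that $(H1)$ is what lets us pass from the one-sided estimate \eqref{(3.2)} (stated for $t \ge 0$) to a bound on $|F(x,u(x))|$ for all $x$.
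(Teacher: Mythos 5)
Your proof is correct and follows essentially the same route as the paper: bound $\int_{\mathbb{R}^N} F(x,u)\,dx$ via the estimate \eqref{(3.2)} together with the continuous embeddings of Theorem \ref{ce}, then use $p>2$ and the arbitrariness of $\varepsilon$ to conclude it is $o(||u||^{2}_{H^{s}(\mathbb{R}^{N})})$. You merely spell out the $\varepsilon$-limit ordering and the role of $(H1)$ for $t<0$, which the paper leaves implicit.
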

\begin{proof}
By using (\ref{(3.2)}) and Theorem \ref{ce} we get
\begin{equation*}
\int_{\R^{N}}F(x,u) dx \leq \frac{\varepsilon}{2} ||u||^{2}_{H^{s}(\R^{N})}+\frac{C'_{\varepsilon}}{p} ||u||^{p}_{H^{s}(\R^{N})}
\end{equation*}
which implies that $\int_{\R^{N}} F(x,u) \,dx=o(||u||^{2}_{H^{s}(\R^{N})})$ as $||u||_{H^{s}(\R^{N})}\rightarrow 0$. 

\end{proof}

\begin{lemma}\label{lemma3.2}
Assume that $(H1), (H2), (H4)$ hold and that $K\in (0, a)$. Then we can find a function $v\in H^{s}(\R^{N})$ such that $v\neq 0$ and $I(v)\leq 0$. 
\end{lemma}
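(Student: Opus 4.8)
The plan is to exhibit an explicit one-parameter family of test functions and exploit the super-linear growth at infinity encoded in $(H4)$. Fix any nonzero $w\in H^{s}(\mathbb{R}^{N})$ with $w\geq 0$ — for instance a smooth compactly supported bump — and consider the rescaled functions $v=tw$ for $t>0$. Then
$$
I(tw)=\frac{t^{2}}{2}\|w\|^{2}_{H^{s}(\mathbb{R}^{N})}-\int_{\mathbb{R}^{N}}F(x,tw)\,dx,
$$
and the goal is to show the right-hand side becomes negative (indeed tends to $-\infty$) as $t\to\infty$. The sign of $I(tw)$ for large $t$ is governed by the competition between the quadratic term $\tfrac{t^{2}}{2}\|w\|^{2}$ and the growth of $\int F(x,tw)$, so everything reduces to a lower bound on $F$.

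First I would derive from $(H4)$ a quadratic-from-below estimate on $F$. Since $f(x,\tau)/\tau\to a$ uniformly in $x$ as $\tau\to\infty$, pick $b$ with $K<b<a$ (possible exactly because we assume $K\in(0,a)$); then there is $R_{b}>0$ such that $f(x,\tau)\geq b\,\tau$ for all $\tau\geq R_{b}$ and a.e.\ $x$. Integrating, for $\tau\geq R_{b}$ one gets
$$
F(x,\tau)=\int_{0}^{R_{b}}f(x,\sigma)\,d\sigma+\int_{R_{b}}^{\tau}f(x,\sigma)\,d\sigma\geq \frac{b}{2}\tau^{2}-C_{1}
$$
for a constant $C_{1}$ independent of $x$ (using $(H1)$, namely $f(\cdot,\tau)\in L^{\infty}$ and $f(x,\tau)\geq 0$ for $\tau\geq 0$ which follows since $f$ vanishes for negative argument; the crude bound $F(x,\sigma)\geq -C_{1}$ on $[0,R_{b}]$ suffices). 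The case $a=\infty$ is only easier: $b$ can be chosen arbitrarily large.

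Next I would localize this to the set where $w$ is bounded below. Let $\Omega=\{x: w(x)\geq \eta\}$ for some $\eta>0$ chosen so that $|\Omega|>0$; since $w\not\equiv 0$ such $\eta$ exists. For $t\geq R_{b}/\eta$ we have $tw(x)\geq R_{b}$ on $\Omega$, hence
$$
\int_{\mathbb{R}^{N}}F(x,tw)\,dx\geq \int_{\Omega}F(x,tw)\,dx\geq \frac{b}{2}t^{2}\int_{\Omega}w^{2}\,dx-C_{1}|\Omega|,
$$
where I have also used $F(x,tw)\geq 0$ on the complement of $\Omega$ (valid since $tw\geq 0$ everywhere and $F(x,\tau)\geq 0$ for $\tau\geq 0$). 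Plugging this in,
$$
I(tw)\leq \frac{t^{2}}{2}\|w\|^{2}_{H^{s}(\mathbb{R}^{N})}-\frac{b}{2}t^{2}\int_{\Omega}w^{2}\,dx+C_{1}|\Omega|.
$$
The coefficient of $t^{2}$ need not be negative with this crude choice of $\Omega$, so the real point is to let $\eta\downarrow 0$: as $\eta\to 0$, $\int_{\Omega}w^{2}\to \|w\|^{2}_{L^{2}(\mathbb{R}^{N})}$ by monotone convergence, so for $\eta$ small enough $\tfrac{b}{2}\int_{\Omega}w^{2}>\tfrac{b}{2}\|w\|^{2}_{L^{2}}-\varepsilon$, and since $b>K$ we can arrange $\tfrac{b}{2}\int_{\Omega}w^{2}>\tfrac{1}{2}\|w\|^{2}_{H^{s}(\mathbb{R}^{N})}$ provided we additionally pick $w$ concentrating its $H^{s}$-norm in its $L^{2}$-norm — i.e.\ we also rescale $w$ spatially. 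Concretely, replacing $w$ by $w_{\rho}(x)=w(x/\rho)$ makes $[w_{\rho}]^{2}_{H^{s}}=\rho^{N-2s}[w]^{2}_{H^{s}}$ while $K\|w_{\rho}\|^{2}_{L^{2}}=\rho^{N}K\|w\|^{2}_{L^{2}}$, so for $\rho$ large the $L^{2}$ part dominates and $\|w_{\rho}\|^{2}_{H^{s}(\mathbb{R}^{N})}\leq (K+\varepsilon)\|w_{\rho}\|^{2}_{L^{2}(\mathbb{R}^{N})}$. With $w_{\rho}$ in place of $w$ and $\eta$ small, the $t^{2}$-coefficient above is $\tfrac{1}{2}\|w_{\rho}\|^{2}_{H^{s}}-\tfrac{b}{2}\int_{\Omega}w_{\rho}^{2}<\tfrac{1}{2}(K+\varepsilon)\|w_{\rho}\|^{2}_{L^{2}}-\tfrac{1}{2}(b-\varepsilon)\|w_{\rho}\|^{2}_{L^{2}}<0$ once $\varepsilon$ is small, because $b>K$. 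Hence $I(tw_{\rho})\to-\infty$, and in particular $I(v)\leq 0$ (indeed $<0$) for $v=tw_{\rho}$ with $t$ large; such $v$ is nonzero, proving the lemma.

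The main obstacle, and the only genuinely delicate point, is this interplay between the scaling parameter $\rho$ (used to force the $H^{s}$ seminorm to be negligible compared with the $L^{2}$ norm, which is where $N>2s$ and $K>0$ enter) and the truncation level $\eta$ (used to pass from $F\geq\tfrac{b}{2}t^{2}$ on the large-value set to a global bound). Everything else is the routine application of $(H4)$ to get the quadratic lower bound on $F$ together with the sign condition $F\geq 0$ for nonnegative argument from $(H1)$; the hypothesis $(H2)$ is not actually needed here (it will matter for the Mountain-Pass geometry at the origin and for compactness, but not for producing this single point $v$ with $I(v)\leq 0$).
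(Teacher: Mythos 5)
Your argument is correct, and it rests on the same underlying mechanism as the paper's proof -- choose a nonnegative profile whose $H^{s}$-seminorm is negligible compared with its $L^{2}$-mass, so that the quadratic growth of $F$ at a rate exceeding $K/2$ forces $I(tw)\to-\infty$ -- but the implementation is genuinely different. The paper uses the explicit normalized Gaussians $w_{\alpha}$, for which $\|w_{\alpha}\|_{L^{2}}=1$ and $[w_{\alpha}]^{2}_{H^{s}}=\alpha^{s}D(N)$, picks $\alpha$ so small that $[w_{\alpha}]^{2}_{H^{s}}<a-K$, and then computes the exact limit $\lim_{t\to\infty}I(tw_{\alpha})/t^{2}=\tfrac12[w_{\alpha}]^{2}_{H^{s}}+\tfrac{K}{2}-\tfrac{a}{2}<0$ by dominated convergence, using the two-sided bound $0\le F(x,t)\le Ct^{2}$ (which also invokes (H3)). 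You instead extract from (H4) only the one-sided quantitative bound $F(x,\tau)\ge\tfrac{b}{2}\tau^{2}-C_{1}$ for a fixed $b\in(K,a)$, localize it on a super-level set $\{w_{\rho}\ge\eta\}$, and let the spatial dilation $w_{\rho}$ play the role of the small-$\alpha$ Gaussian; the two-parameter interplay is handled correctly, since $\int_{\{w_{\rho}\ge\eta\}}w_{\rho}^{2}\,/\,\|w_{\rho}\|^{2}_{L^{2}}$ is scale-invariant, so one may fix $\eta$ small for $w$ and then take $\rho$ large. What your route buys: it treats $a=\infty$ on exactly the same footing (just take $b$ arbitrarily large), whereas the paper carries out only the case $a<\infty$ ``for simplicity'', and it dispenses with the dominated-convergence step and with any upper bound on $F$. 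Two small caveats: nonnegativity of $f(x,t)$ for $t\ge0$ does not follow, as you assert, from the vanishing of $f$ for $t<0$ -- it is an implicit standing assumption, which the paper itself uses in the same way (e.g.\ in $0\le F(x,t)\le Ct^{2}$ and in $\int_{\mathbb{R}^{N}}F(x,u)\,dx\ge0$); and the dilation step needs only $s>0$, not $N>2s$, since $[w_{\rho}]^{2}_{H^{s}}/\|w_{\rho}\|^{2}_{L^{2}}=\rho^{-2s}[w]^{2}_{H^{s}}/\|w\|^{2}_{L^{2}}$.
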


\begin{proof}
For simplicity we assume $a<\infty$. 

Let us introduce
$$
d^{2}(N):= \int_{\R^{N}} e^{-2|x|^{2}} dx \mbox{ and } D(N):= \frac{2}{C(s,N)}d(N)^{-2} \int_{\R^{N}} |k|^{2s} e^{-2|k|^{2}} dk.
$$
For $\alpha>0$ we set $\displaystyle{w_{\alpha}(x)= d(N)^{-1} \alpha^{\frac{N}{4}} e^{-\alpha |x|^{2}}}$. 
Then it is easy to prove that 
\begin{align*}
w_{\alpha}\in H^{s}(\R^{N}),\,
||w_{\alpha}||_{L^{2}(\R^{N})}=1 \, \mbox{ and } \,[w_{\alpha}]_{H^{s}(\R^{N})}^{2} = \alpha^{s} D(N). 
\end{align*}
Fix $\alpha \in (0, \Bigl(\frac{a-K}{D(N)}\Bigr)^{\frac{1}{s}})$. Thus we deduce that 
\begin{equation}\label{(3.3)}
[w_{\alpha}]_{H^{s}(\R^{N})}^{2}< a-K. 
\end{equation}
Since $t w_{\alpha} (x) \rightarrow +\infty$ as $t\rightarrow \infty$, by $(H4)$ we have
\begin{equation}\label{t1}
\lim_{t\rightarrow +\infty} \frac{F(x,tw_{\alpha})}{t^{2} w_{\alpha}^{2}} = \frac{a}{2} \,\mbox{ a.e. } x\in \R^{N}. 
\end{equation}
On the other hand, by using $(H1), (H3)$, and $(H4)$ we obtain the existence of a positive constant $C$ such that
\begin{equation}\label{t2}
0\leq \frac{F(x,t)}{t^{2}}\leq C \, \mbox{ for any } t\in \R \mbox{ and a.e. } x\in \R^{N}.
\end{equation}
Then, taking into account (\ref{t1}) and (\ref{t2}), and by using the Dominated Convergence Theorem we can see 
\begin{equation}\label{t3}
\lim_{t\rightarrow +\infty} \int_{\R^{N}} \frac{F(x,tw_{\alpha})}{t^{2}} = \frac{a}{2} \int_{\R^{N}} w_{\alpha}^{2} \, dx = \frac{a}{2}.
\end{equation}
As a consequence, by using (\ref{(3.3)}) and (\ref{t3}) we obtain 
\begin{align*}
\lim_{t\rightarrow +\infty} \frac{I(tw_{\alpha})}{t^{2}} &= \frac{1}{2} [w_{\alpha}]^{2}_{H^{s}(\R^{N})} + \frac{K}{2} || w_{\alpha}||^{2}_{L^{2}(\R^{N})} - \lim_{t\rightarrow +\infty} \int_{\R^{N}} \frac{F(x, tw_{\alpha})}{t^{2}} dx \\
&=\frac{1}{2} [w_{\alpha}]^{2}_{H^{s}(\R^{N})} + \frac{K}{2} -\frac{a}{2} <0.
\end{align*}
\end{proof}

To construct a solution of (\ref{P}), we introduce the following parametrized family of functionals
\begin{equation}\label{ilambda}
I_{\lambda}(u)=\frac{1}{2}||u||^{2}_{H^{s}(\R^{N})}-\lambda\int_{\R^{N}} F(x,u) dx \mbox{ with } \lambda\in [1,2].
\end{equation}


Thus, we are ready to prove 
\begin{lemma}\label{lemma3.3}
Assume that $(H1)-(H4)$ hold. Then the family $(I_{\lambda})_{\lambda \in [1,2]}$ defined in (\ref{ilambda}) satisfies the hypotheses of Theorem \ref{absJ}. In particular for almost every $\lambda\in [1,2]$ there exists a bounded sequence $\{v_{m}\} \subset H^{s}(\R^{N})$ such that
$$
I_{\lambda} (v_{m}) \rightarrow c_{\lambda}\, \mbox{ and } \, I'_{\lambda}(v_{m}) \rightarrow 0 \, \mbox{ in } \, H^{-s}(\R^{N}). 
$$
\end{lemma}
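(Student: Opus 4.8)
The plan is to verify the three structural hypotheses of Theorem \ref{absJ} for the family $I_\lambda(u)=A(u)-\lambda B(u)$ with $A(u)=\tfrac12\|u\|^2_{H^s(\R^N)}$ and $B(u)=\int_{\R^N}F(x,u)\,dx$, and then to check the uniform Mountain-Pass geometry. First I would note that $B(u)\ge 0$ for all $u\in H^s(\R^N)$: indeed by (H1), $f(x,t)=0$ for $t<0$, so $F(x,t)=\int_0^t f(x,\tau)\,d\tau\ge 0$ for every $t\in\R$ (it vanishes for $t\le 0$ and is nondecreasing in $t$ for $t\ge 0$ since $f\ge 0$ there — or more simply, $f(x,\cdot)$ nonnegative on $[0,\infty)$ forces $F\ge 0$). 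Next, since $A(u)=\tfrac12\|u\|^2_{H^s(\R^N)}\to+\infty$ as $\|u\|_{H^s(\R^N)}\to\infty$, the coercivity-of-$A$ alternative in Theorem \ref{absJ} holds trivially. It remains to produce the two reference points $v_1,v_2$ and to show the strict Mountain-Pass inequality holds uniformly in $\lambda\in[1,2]$.

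For the geometry, take $v_1=0$. By Lemma \ref{lemma3.1}, $I(u)=\tfrac12\|u\|^2_{H^s(\R^N)}+o(\|u\|^2_{H^s(\R^N)})$; since $\lambda\ge 1$ and $B\ge 0$ we have $I_\lambda(u)\le I(u)$, but for the lower bound I would use the estimate \eqref{(3.2)} directly with the factor $\lambda\le 2$ absorbed, namely $\lambda\int_{\R^N}F(x,u)\,dx\le \varepsilon\|u\|^2_{H^s(\R^N)}+\tfrac{2C'_\varepsilon}{p}\|u\|^p_{H^s(\R^N)}$, which gives a $\rho>0$ and $\sigma>0$, independent of $\lambda\in[1,2]$, with $I_\lambda(u)\ge\sigma>0$ whenever $\|u\|_{H^s(\R^N)}=\rho$. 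For $v_2$, I would take $v_2=t_0 w_\alpha$ with $w_\alpha$ the Gaussian from Lemma \ref{lemma3.2} and $t_0$ large: from that lemma's computation, $I(t w_\alpha)/t^2\to\tfrac12[w_\alpha]^2_{H^s(\R^N)}+\tfrac K2-\tfrac a2<0$, and since $I_2(tw_\alpha)=\tfrac12\|tw_\alpha\|^2_{H^s(\R^N)}-2\int F(x,tw_\alpha)\le I(tw_\alpha)$ for $t$ such that $\int F\ge0$ (always true), we get $I_\lambda(t_0 w_\alpha)\le I_2(t_0 w_\alpha)\le 0$ for all $\lambda\in[1,2]$ once $t_0$ is large enough; also $\|t_0 w_\alpha\|_{H^s(\R^N)}>\rho$. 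Then for every $\lambda\in[1,2]$, every path $\gamma\in\Gamma$ joining $0$ to $v_2$ crosses the sphere of radius $\rho$, so $\max_{t}I_\lambda(\gamma(t))\ge\sigma$, hence $c_\lambda\ge\sigma>0=\max\{I_\lambda(v_1),I_\lambda(v_2)\}$ using $I_\lambda(v_1)=0$ and $I_\lambda(v_2)\le 0$. In the case $a=\infty$ one argues identically, only noting that $[w_\alpha]^2_{H^s(\R^N)}+K<a=\infty$ is automatic and the same Dominated-Convergence argument (now with an arbitrarily large finite truncation of $a$) yields $I(tw_\alpha)/t^2\to-\infty$.

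With all hypotheses of Theorem \ref{absJ} verified, its conclusion gives, for almost every $\lambda\in[1,2]$, a bounded sequence $\{v_m\}\subset H^s(\R^N)$ with $I_\lambda(v_m)\to c_\lambda$ and $I'_\lambda(v_m)\to 0$ in $H^{-s}(\R^N)$, which is exactly the assertion. I expect the only genuinely delicate point to be the $\lambda$-uniformity of the Mountain-Pass geometry: one must be careful that the radius $\rho$ and the barrier level $\sigma$ at the small sphere, and the largeness of $t_0$ at $v_2$, can all be chosen independently of $\lambda$ — this is why it is essential to estimate with the worst constant $\lambda=2$ throughout, and it uses in a crucial way that $B(u)\ge 0$ so that $\lambda\mapsto I_\lambda(u)$ is nonincreasing. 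The verification that $B\ge 0$ and the coercivity of $A$ are immediate from (H1) and the definition of the norm, respectively.
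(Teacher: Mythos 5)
Your proposal is correct and follows essentially the same route as the paper: take $A(u)=\tfrac12\|u\|^{2}_{H^{s}(\R^{N})}$, $B(u)=\int_{\R^{N}}F(x,u)\,dx\ge 0$, use the endpoint $v$ from Lemma \ref{lemma3.2} (so $I_{\lambda}(v)\le I(v)\le 0$ for $\lambda\ge 1$) and the expansion of Lemma \ref{lemma3.1} to get $c_{\lambda}>0$, and then invoke Theorem \ref{absJ}. You are merely more explicit than the paper about the $\lambda$-uniformity of the small-sphere bound (estimating with $\lambda=2$) and about the $a=\infty$ case, which the paper leaves implicit.
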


\begin{proof}
Let $v\in H^{s}(\R^{N})$ be the function obtained in Lemma \ref{lemma3.2}. Then we have $I_{\lambda} (v) \leq 0$ for all $\lambda \geq 1$ since
$$
\int_{\R^{N}} F(x,u) \, dx \geq 0, \quad \forall u\in H^{s}(\R^{N}). 
$$ 
By Lemma \ref{lemma3.1} follows that
$$
\int_{\R^{N}} F(x,u) \, dx =o(||u||^{2}_{H^{s}(\R^{N})}) \,\mbox{ as }\, ||u||_{H^{s}(\R^{N})}\rightarrow 0. 
$$
Then, for any $\lambda \in [1,2]$ we have 
$$
c_{\lambda} = \inf_{\gamma \in \Gamma} \max_{t\in [0,1]} I_{\lambda} (\gamma(t)) >0
$$
where 
$$
\Gamma = \{ \gamma \in C([0,1], H^{s}(\R^{N})) : \gamma(0)=0 \mbox{ and } \gamma(1)=v\}. 
$$
Therefore, we are in the position to apply Theorem \ref{absJ}.  

\end{proof}

Now we give the following terminology which we will often use later. Let $\{ u_{n} \}\subset H^{s}(\R^{N})$ be an arbitrary sequence. We say that $\{ u_{n} \}$ does not vanish if it is possible to translate each $u_{n}$ so that the translated sequence (still denoted by $\{u_{n} \}$) satisfies, up to a subsequence, the following condition:
there exists $\alpha>0$ and $R<\infty$ such that 
$$
\lim_{n\rightarrow \infty} \int_{B_{R}} u_{n}^{2} dx\geq \alpha>0.
$$
If it is not the case then necessarily one has 
$$
\lim_{n\rightarrow \infty} \sup_{y\in \mathbb{Z}^{N}}\int_{y+B_{R}} u_{n}^{2} dx=0 \mbox{ for any } R<\infty
$$
and we say that $\{ u_{n} \}$ vanishes.

\begin{lemma} \label{lemma3.4}
Assume that $(H1)-(H3)$ hold. Let $\{u_{n}\}\subset H^{s}(\R^{N})$ be a bounded sequence which vanishes. Then 
$$
\lim_{n\rightarrow +\infty} \int_{\R^{N}} G(x, u_{n}) \, dx =0. 
$$ 
\end{lemma}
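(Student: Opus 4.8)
The plan is to recall that $G(x,t)=\tfrac12 f(x,t)t-F(x,t)$ and to exploit the growth estimates from the preliminary lemma together with the vanishing hypothesis. First I would establish a pointwise bound on $G$. From $(H1)$ we know $f(x,t)=0$ for $t<0$, so we may assume $t\geq 0$; from \eqref{(3.1)} and \eqref{(3.2)} we get, for any $\varepsilon>0$, a constant $C_\varepsilon$ with
\begin{align*}
0\leq G(x,t)=\frac12 f(x,t)t-F(x,t)\leq \frac12 f(x,t)t\leq \frac{\varepsilon}{2}t^2+\frac{C_\varepsilon}{2}t^p \quad (t\geq 0).
\end{align*}
Since $G(x,u_n)$ is supported where $u_n\geq 0$, this gives
\begin{align*}
0\leq \int_{\R^N} G(x,u_n)\,dx\leq \frac{\varepsilon}{2}\|u_n\|_{L^2(\R^N)}^2+\frac{C_\varepsilon}{2}\|u_n\|_{L^p(\R^N)}^p.
\end{align*}

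Next I would use the two ingredients that make the right-hand side small. The sequence $\{u_n\}$ is bounded in $H^s(\R^N)$, hence bounded in $L^2(\R^N)$, so $\|u_n\|_{L^2(\R^N)}^2\leq M$ for some $M$ independent of $n$. On the other hand, since $\{u_n\}$ vanishes we have $\lim_{n\to\infty}\sup_{y\in\R^N}\int_{B_R(y)}|u_n|^2\,dx=0$ for every $R$ (the statement for $y\in\Z^N$ upgrades to $y\in\R^N$ by a standard covering argument, increasing $R$ by a fixed amount), so Lemma \ref{fqt} applies and yields $u_n\to 0$ in $L^p(\R^N)$ for the given $p\in(2,\frac{2N}{N-2s})$. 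Therefore, given any $\eta>0$, first choose $\varepsilon$ so that $\tfrac{\varepsilon}{2}M<\tfrac{\eta}{2}$, then choose $n$ large enough that $\tfrac{C_\varepsilon}{2}\|u_n\|_{L^p(\R^N)}^p<\tfrac{\eta}{2}$; this shows $\int_{\R^N}G(x,u_n)\,dx\to 0$. The nonnegativity of $G$ from $(J1)$ is what lets us drop absolute values and conclude the limit is exactly $0$.

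I do not anticipate a serious obstacle here; the proof is essentially a combination of the elementary bound on $G$ with the Lions-type vanishing lemma (Lemma \ref{fqt}). The one point requiring a little care is the passage from the definition of ``vanishing'' (formulated over $y\in\Z^N$ with a fixed radius $R$) to the hypothesis of Lemma \ref{fqt} (a sup over $y\in\R^N$, for $R>0$): this is handled by noting that any ball $B_R(y)$ with $y\in\R^N$ is contained in $B_{R+\sqrt N}(z)$ for a suitable $z\in\Z^N$, so vanishing in the lattice sense forces $\sup_{y\in\R^N}\int_{B_{R'}(y)}|u_n|^2\,dx\to 0$ with $R'=R+\sqrt N$, which is all that is needed. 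Everything else is a routine $\varepsilon$-argument exploiting the uniform $L^2$-bound to absorb the $\varepsilon t^2$ term.
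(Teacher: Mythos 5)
Your proof is correct and takes essentially the same route as the paper: combine the growth estimates \eqref{(3.1)}--\eqref{(3.2)} with the uniform $L^{2}$-bound coming from boundedness in $H^{s}(\R^{N})$ and with Lemma \ref{fqt} applied to the vanishing sequence (your covering remark for passing from $y\in\Z^{N}$ to $y\in\R^{N}$ is a careful touch the paper leaves implicit). One small remark: invoking $(J1)$ for $G\geq 0$ is neither among the hypotheses $(H1)$--$(H3)$ nor needed, since the same estimates also give a lower bound, $\int_{\R^{N}}G(x,u_{n})\,dx\geq-\int_{\R^{N}}F(x,u_{n})\,dx\geq-\frac{\varepsilon}{2}\|u_{n}\|_{L^{2}(\R^{N})}^{2}-\frac{C_{\varepsilon}}{p}\|u_{n}\|_{L^{p}(\R^{N})}^{p}$ (using, as the paper does throughout, that $f(x,t)t\geq 0$), so the two-sided $\varepsilon$-argument yields the limit without any sign condition on $G$.
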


\begin{proof}
By using Lemma \ref{fqt} we know that 
\begin{equation}\label{t5}
u_{n}\rightarrow 0 \mbox{ in } L^{q}(\R^{N}) \mbox{ for any } q\in \Bigl(2, \frac{2N}{N-2s}\Bigr). 
\end{equation}
Taking into account (\ref{(3.1)}), (\ref{(3.2)}), Theorem \ref{ce} and (\ref{t5}), and by using the fact that $u_{n}$ is bounded, we can see that
\begin{align*}
\int_{\R^{N}} G(x, u_{n}) \, dx = \int_{\R^{N}} \Bigl[ \frac{1}{2} f(x, u_{n}) u_{n} - F(x, u_{n})\Bigr] \, dx \rightarrow 0 \, \mbox{ as } n\rightarrow \infty. 
\end{align*}
\end{proof}

Now we prove the following result
\begin{lemma}\label{lemma3.5}
Assume that $(H1)-(H4)$ and either $(J1)$ or $(J2)$ hold. Let $\lambda \in [1,2]$ be fixed. Let $\{v_{m}\}\subset H^{s}(\R^{N})$ be a  bounded sequence such that 
\begin{compactenum}[(I)]
\item $\displaystyle{0<\lim_{m\rightarrow +\infty} I_{\lambda} (v_{m}) \leq c_{\lambda}}$;
\item $I'_{\lambda}(v_{m}) \rightarrow 0 \mbox{ in } H^{-s}(\R^{N})$. 
\end{compactenum}
Then there exists $\{y_{m}\}\subset \mathbb{Z}^{N}$ such that, up to a subsequence, $u_{m}(x):= v_{m}(x-y_{m})$ satisfies 
\begin{compactenum}[(i)]
\item $u_{m}\rightharpoonup u_{\lambda} \neq 0$;
\item $I_{\lambda} (u_{\lambda}) \leq c_{\lambda}$; 
\item $I'_{\lambda}(u_{\lambda}) =0$. 
\end{compactenum}
\end{lemma}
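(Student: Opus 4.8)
The goal is a concentration-compactness argument: starting from a bounded Palais--Smale sequence $\{v_m\}$ at level $c_\lambda>0$, I want to rule out vanishing, extract a nontrivial weak limit after suitable $\mathbb{Z}^N$-translations, and then show that limit is a critical point at level $\le c_\lambda$. First I would argue that $\{v_m\}$ does not vanish. Suppose it does. Testing $I'_\lambda(v_m)\to 0$ against $v_m$ gives $\|v_m\|^2_{H^s(\R^N)}-\lambda\int_{\R^N}f(x,v_m)v_m\,dx\to 0$, and combining with $I_\lambda(v_m)\to \ell:=\lim I_\lambda(v_m)>0$ yields $\lambda\int_{\R^N}G(x,v_m)\,dx\to \ell>0$. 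But a vanishing bounded sequence has $\int_{\R^N}G(x,v_m)\,dx\to 0$ by Lemma~\ref{lemma3.4}, a contradiction. Hence $\{v_m\}$ does not vanish, so there are $\{y_m\}\subset\mathbb{Z}^N$, $\alpha>0$, $R<\infty$ with $\liminf_m\int_{B_R}|v_m(x-y_m)|^2\,dx\ge\alpha>0$; set $u_m(x):=v_m(x-y_m)$.

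\textbf{Passing to the limit.} By periodicity of $f$ in $x$ (hypothesis $(H1)$) and translation invariance of the $H^s$-seminorm and the $L^2$-norm under integer shifts, $\{u_m\}$ is still bounded in $H^s(\R^N)$, still satisfies $I_\lambda(u_m)\to\ell$ and $I'_\lambda(u_m)\to 0$ in $H^{-s}(\R^N)$. Extracting a subsequence, $u_m\rightharpoonup u_\lambda$ in $H^s(\R^N)$, $u_m\to u_\lambda$ in $L^q_{loc}$ for $q\in[2,\tfrac{2N}{N-2s})$ by Theorem~\ref{ce}, and $u_m\to u_\lambda$ a.e. The local $L^2$-convergence on $B_R$ together with the nonvanishing lower bound forces $\int_{B_R}|u_\lambda|^2\,dx\ge\alpha>0$, so $u_\lambda\neq 0$, giving (i). For (iii), I would pass to the limit in the weak formulation: for fixed $\varphi\in C^\infty_c(\R^N)$, the bilinear form $\iint\frac{(u_m(x)-u_m(y))(\varphi(x)-\varphi(y))}{|x-y|^{N+2s}}\,dx\,dy$ and $\int Ku_m\varphi$ converge to the corresponding terms with $u_\lambda$ by weak convergence, while $\int f(x,u_m)\varphi\,dx\to\int f(x,u_\lambda)\varphi\,dx$ follows from a.e. convergence, the growth bound~(\ref{(3.1)}), Lemma~\ref{br} applied on the compact $\supp\varphi$, and dominated convergence; density of $C^\infty_c$ in $H^s(\R^N)$ then gives $I'_\lambda(u_\lambda)=0$.

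\textbf{The energy inequality.} Part (ii) is the main obstacle. Since $I'_\lambda(u_\lambda)=0$, testing against $u_\lambda$ gives $\|u_\lambda\|^2_{H^s(\R^N)}=\lambda\int_{\R^N}f(x,u_\lambda)u_\lambda\,dx$, so $I_\lambda(u_\lambda)=\lambda\int_{\R^N}G(x,u_\lambda)\,dx$. On the other hand, from $I_\lambda(u_m)\to\ell$ and $\langle I'_\lambda(u_m),u_m\rangle\to 0$ we get $\lambda\int_{\R^N}G(x,u_m)\,dx\to\ell$. So it suffices to show $\int_{\R^N}G(x,u_\lambda)\,dx\le\liminf_m\int_{\R^N}G(x,u_m)\,dx$; combined with $\ell\le c_\lambda$ from (I) this yields $I_\lambda(u_\lambda)\le c_\lambda$. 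The inequality for the $G$-integrals is where $(J1)$ (resp.\ $(J2)$) enters: $G(x,t)\ge 0$ for $t\ge 0$ by $(J1)$, and I would combine Fatou's lemma (using $u_m\to u_\lambda$ a.e., $G\ge 0$, and $G(x,\cdot)$ continuous) directly to get $\int_{\R^N}G(x,u_\lambda)\,dx\le\liminf_m\int_{\R^N}G(x,u_m)\,dx=\ell/\lambda$. Hence $I_\lambda(u_\lambda)=\lambda\int_{\R^N}G(x,u_\lambda)\,dx\le\ell\le c_\lambda$, which is (ii). I expect the delicate points to be: justifying that the weak limit is genuinely nonzero (handled via the nonvanishing quantitative bound surviving the $L^2_{loc}$ limit), and verifying the convergence $\int f(x,u_m)\varphi\to\int f(x,u_\lambda)\varphi$ carefully using the subcritical growth and local compactness; in the $a=\infty$ case one replaces the role of $(J1)$ by $(J2)$, which still guarantees $G\ge 0$ and allows the same Fatou argument, while the nonvanishing step uses $(J2)$ in place of the second clause of $(J1)$ when showing $\int G(x,v_m)\,dx\not\to 0$.
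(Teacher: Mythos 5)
Your proposal is correct and follows essentially the same route as the paper: rule out vanishing via Lemma \ref{lemma3.4} and the identity $\lambda\int_{\R^{N}}G(x,v_{m})\,dx=I_{\lambda}(v_{m})-\tfrac{1}{2}I'_{\lambda}(v_{m})v_{m}\rightarrow \ell>0$, translate by $\{y_{m}\}\subset\Z^{N}$ using periodicity, pass to the limit in the weak formulation against $\varphi\in C^{\infty}_{0}(\R^{N})$ by local compactness and density, and obtain (ii) from $G\geq 0$ (via $(J1)$ or $(J2)$) and Fatou's Lemma. The only minor inaccuracy is your closing remark that the nonvanishing step needs the second clause of $(J1)$ or $(J2)$: it does not, since Lemma \ref{lemma3.4} only requires $(H1)$--$(H3)$; $(J1)$/$(J2)$ enter solely through $G\geq 0$ in the Fatou step.
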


\begin{proof}
Taking into account $(I),(II)$ and the boundedness of $v_{m}$ we have 
$$
\lim_{m\rightarrow \infty} \int_{\R^{N}} G(x, v_{m}) \, dx = \lim_{m\rightarrow \infty} [I_{\lambda} (v_{m}) -\frac{1}{2} I'_{\lambda}(v_{m})v_{m} ] >0. 
$$
Then, by Lemma \ref{lemma3.4} we can see that $v_{m}$ does not vanish, so there exists $\{y_{m}\}\subset \Z^{N}$ such that, up to a subsequence, $u_{m}(x) = v_{m}(x- y_{m})$ satisfies the following condition: there exist $\alpha>0$ and  $R<\infty$ such that
\begin{equation}\label{(3.5)}
\lim_{m\rightarrow \infty} \int_{B_{R}} u_{m}^{2} dx \geq \alpha >0.  
\end{equation}
Since the problem (\ref{P}) is invariant under the translation group associated to the periodicity of $f(\cdot, t)$, we have 
\begin{compactenum}[(a)]
\item $0<\lim_{m\rightarrow +\infty} I_{\lambda} (u_{m}) \leq c_{\lambda}$;
\item $I'_{\lambda}(u_{m}) \rightarrow 0 \mbox{ in } H^{-s}(\R^{N})$;
\item $u_{m} \rightharpoonup u_{\lambda}$, for some $u_{\lambda} \in H^{s}(\R^{N})$. 
\end{compactenum}
Then, $(i)$ follows by (c), (\ref{(3.5)}) and Theorem \ref{ce}. 
In order to prove $(iii)$, it is enough to show that $I'_{\lambda}(v) \varphi =0$ for all $\varphi \in C^{\infty}_{0}(\R^{N})$, since $C^{\infty}_{0}(\R^{N})$ is dense in $H^{s}(\R^{N})$ (see \cite{DPV}). 
Taking into account  $u_{m}\rightharpoonup u_{\lambda}$ in $H^{s}(\R^{N})$ and $u_{m}\rightarrow u_{\lambda}$ in $L^{q}_{loc}(\R^{N})$ for any $q\in [2, \frac{2N}{N-2s})$, we get
$$
I'_{\lambda}(u_{m}) \varphi - I'_{\lambda}(u_{\lambda})\varphi = (u_{m}- u_{\lambda}, \varphi )_{H^{s}(\R^{N})} - \int_{\R^{N}} (f(x, u_{m}) - f(x, u_{\lambda}) ) \varphi \, dx \rightarrow 0.
$$ 
Then $(iii)$  follows by (b). Finally we verify $(iv)$. We note that either $(J1)$ or $(J2)$ imply that 
\begin{equation*}
G(x,t)\geq 0 \mbox{ for all } t\geq 0 \mbox{ and a.e. } x\in \R^{N}.
\end{equation*}
So, by using Fatou's Lemma we can see that 
\begin{align*}
c_{\lambda} &\geq \lim_{m\rightarrow +\infty} \Bigl[I_{\lambda}(u_{m}) - \frac{1}{2} I'_{\lambda}(u_{m}) u_{m}\Bigr] \\
&= \lim_{m\rightarrow +\infty} \int_{\R^{N}} G(x, u_{m}) \, dx \\
&\geq \int_{\R^{N}} G(x, u_{\lambda}) \, dx \\
&= I_{\lambda}(u_{\lambda}) - \frac{1}{2} I'_{\lambda}(u_{\lambda}) u_{\lambda} =  I_{\lambda}(u_{\lambda}).
\end{align*}
\end{proof}

Combining Lemma \ref{lemma3.3} and Lemma \ref{lemma3.5} we obtain the existence of two sequences $\{\lambda_{n}\} \subset [1,2]$ and $\{u_{n}\} \subset H^{s}(\R^{N})$ such that 
\begin{itemize}
\item $\lambda_{n}\rightarrow 1$ and $\{\lambda_{n}\}$ is decreasing;
\item $u_{n} \neq 0$, $I_{\lambda_{n}}(u_{n})\leq c_{\lambda_{n}}$ and $I'_{\lambda_{n}}(u_{n})=0$. 
\end{itemize}
Let us observe that $u_{n}\geq 0$ a.e. in $\R^{N}$ (it is enough to multiply $(-\Delta)^{s}u_{n}+Ku_{n}=\lambda_{n} f(x,u_{n})$ in $\R^{N}$ by the negative part of $u_{n}$ and then one uses the assumption $(H1)$).

Taking into account
$$
\frac{1}{2} ||u_{n}||^{2}_{H^{s}(\R^{N})} - \lambda_{n} \int_{\R^{N}} F(x, u_{n}) dx \leq c_{\lambda_{n}},
$$
$$
||u_{n}||^{2}_{H^{s}(\R^{N})} = \lambda_{n} \int_{\R^{N}} f(x, u_{n}) u_{n} dx 
$$
and the fact that $\frac{c_{\lambda_{n}}}{\lambda_{n}}$ is increasing, we deduce 
\begin{equation}\label{(3.7)}
\int_{\R^{N}} G(x, u_{n}) dx \leq \frac{c_{\lambda_{n}}}{\lambda_{n}} \leq c_{1} \quad  \forall n\in \N. 
\end{equation}

\begin{lemma}\label{lemma3.6}
Assume that $(H1)-(H4)$ and either $(J1)$ or $(J2)$ hold. If the sequence $\{u_{n}\}\subset H^{s}(\R^{N})$ given above is bounded, then there exists $u\in H^{s}(\R^{N})$, $u\neq 0$ such that $I'(u)=0$.  
\end{lemma}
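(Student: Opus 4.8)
The plan is to regard $\{u_{n}\}$ as an asymptotic Palais--Smale sequence for the limiting functional $I=I_{1}$ and to extract from it a non-trivial critical point, following the scheme of \cite{J}. The first point is that $||u_{n}||_{H^{s}(\R^{N})}$ is bounded away from $0$. Indeed, $I'_{\lambda_{n}}(u_{n})=0$ gives
$$||u_{n}||^{2}_{H^{s}(\R^{N})}=\lambda_{n}\int_{\R^{N}}f(x,u_{n})u_{n}\,dx,$$
and, inserting (\ref{(3.1)}), using $K>0$ and $1\le\lambda_{n}\le 2$ together with Theorem \ref{ce}, I would estimate $||u_{n}||^{2}_{H^{s}(\R^{N})}\le C\varepsilon\,||u_{n}||^{2}_{H^{s}(\R^{N})}+C'_{\varepsilon}||u_{n}||^{p}_{H^{s}(\R^{N})}$; choosing $\varepsilon$ small and absorbing, and recalling $u_{n}\neq0$, this yields $||u_{n}||_{H^{s}(\R^{N})}\ge\rho>0$ with $\rho$ independent of $n$.

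Next I would show that $\{u_{n}\}$ does not vanish, arguing by contradiction. If $\{u_{n}\}$ vanishes, then vanishing in the $\Z^{N}$-sense forces $\lim_{n}\sup_{y\in\R^{N}}\int_{B_{R}(y)}u_{n}^{2}\,dx=0$ for every $R<\infty$, so Lemma \ref{fqt} gives $u_{n}\to0$ in $L^{q}(\R^{N})$ for all $q\in(2,\frac{2N}{N-2s})$; combining this with (\ref{(3.1)}) and Theorem \ref{ce} yields $\int_{\R^{N}}f(x,u_{n})u_{n}\,dx\to0$, whence $||u_{n}||^{2}_{H^{s}(\R^{N})}=\lambda_{n}\int_{\R^{N}}f(x,u_{n})u_{n}\,dx\to0$, contradicting the lower bound just obtained. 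Therefore there exist $\{y_{n}\}\subset\Z^{N}$, $\alpha>0$ and $R<\infty$ such that, along a subsequence, $\tilde u_{n}:=u_{n}(\cdot-y_{n})$ satisfies $\liminf_{n}\int_{B_{R}}\tilde u_{n}^{2}\,dx\ge\alpha>0$.

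Since $f(\cdot,t)$ is $1$-periodic and the fractional seminorm is translation invariant, these translates still satisfy $I'_{\lambda_{n}}(\tilde u_{n})=0$ and remain bounded in $H^{s}(\R^{N})$; passing to a further subsequence, $\tilde u_{n}\rightharpoonup u$ in $H^{s}(\R^{N})$ and $\tilde u_{n}\to u$ in $L^{q}_{loc}(\R^{N})$ for $q\in[2,\frac{2N}{N-2s})$ by Theorem \ref{ce}, and the compact embedding into $L^{2}(B_{R})$ together with the last inequality gives $\int_{B_{R}}u^{2}\,dx\ge\alpha>0$, so $u\neq0$. To conclude I would check that $I'(u)\varphi=0$ for every $\varphi\in C^{\infty}_{0}(\R^{N})$, which suffices since $C^{\infty}_{0}(\R^{N})$ is dense in $H^{s}(\R^{N})$. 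Writing $I'_{\lambda_{n}}(\tilde u_{n})\varphi=(\tilde u_{n},\varphi)_{H^{s}(\R^{N})}-\lambda_{n}\int_{\R^{N}}f(x,\tilde u_{n})\varphi\,dx=0$, weak convergence handles the first term; on the compact support of $\varphi$ one has $\tilde u_{n}\to u$ a.e.\ with an $L^{q}$ dominating function from Lemma \ref{br}, so (\ref{(3.1)}) and the dominated convergence theorem give $\int_{\R^{N}}f(x,\tilde u_{n})\varphi\,dx\to\int_{\R^{N}}f(x,u)\varphi\,dx$; since $\lambda_{n}\to1$, this yields $(u,\varphi)_{H^{s}(\R^{N})}=\int_{\R^{N}}f(x,u)\varphi\,dx$, i.e.\ $I'(u)=I'_{1}(u)=0$.

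I expect the non-vanishing step to be the main obstacle: it is the only place where one genuinely uses that each $u_{n}$ is a true critical point of $I_{\lambda_{n}}$ (so that the norm is bounded below), which is incompatible with vanishing. The rest is a by-now routine limiting procedure resting on the estimates (\ref{(3.1)})--(\ref{(3.2)}), Theorem \ref{ce} and Lemma \ref{br}; note that the hypotheses $(J1)$/$(J2)$ enter the surrounding construction (through $G\ge0$ and the uniform bound (\ref{(3.7)})) but are not needed for the argument above once $\{u_{n}\}$ is known to be bounded.
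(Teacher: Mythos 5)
Your proof is correct, but it reaches the non-vanishing of $\{u_{n}\}$ by a genuinely more direct route than the paper. The paper splits into two cases according to $\limsup_{n}I_{\lambda_{n}}(u_{n})$: when this is positive it invokes Lemma \ref{lemma3.5} (with $\lambda=1$, after observing $I'(u_{n})\rightarrow 0$ and $I(u_{n})=I_{\lambda_{n}}(u_{n})+(\lambda_{n}-1)\int F(x,u_{n})dx$), where non-vanishing comes from $\lim\int G(x,u_{n})dx>0$ and Lemma \ref{lemma3.4}; when it is nonpositive it introduces the auxiliary sequence $z_{n}=t_{n}u_{n}$ maximizing $I_{\lambda_{n}}$ along the segment, uses the uniform small-norm expansion of Lemma \ref{lemma3.1} together with $||u_{n}||_{H^{s}(\R^{N})}\geq\alpha>0$ to get $\liminf\int G(x,z_{n})dx>0$, and again concludes non-vanishing via Lemma \ref{lemma3.4}. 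You bypass both the case distinction and the $z_{n}$ construction: since each $u_{n}$ is an exact critical point, $||u_{n}||^{2}_{H^{s}(\R^{N})}=\lambda_{n}\int f(x,u_{n})u_{n}dx$, and (\ref{(3.1)}) with Theorem \ref{ce} gives on one hand the uniform lower bound $||u_{n}||_{H^{s}(\R^{N})}\geq\rho>0$ (the same bound the paper derives inside its second case), and on the other hand, under vanishing, $\int f(x,u_{n})u_{n}dx\rightarrow 0$ via Lemma \ref{fqt} (the $L^{2}$ term being only bounded, but multiplied by an arbitrary $\varepsilon$), forcing $||u_{n}||_{H^{s}(\R^{N})}\rightarrow 0$, a contradiction; so Lemma \ref{lemma3.4} and the energy-level dichotomy are not needed at all. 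The concluding step (integer translations using periodicity, nonzero weak limit via the local compact embedding, and passage to the limit in $I'_{\lambda_{n}}(\tilde u_{n})\varphi=0$ with $\lambda_{n}\rightarrow 1$, Lemma \ref{br} and dominated convergence) coincides with the paper's argument in Lemma \ref{lemma3.5}. What your route buys is brevity and the observation, which you make explicitly and which is accurate, that $(J1)$/$(J2)$ play no role in this lemma once boundedness is assumed; what the paper's route reflects is fidelity to Jeanjean's original scheme, in which the quantity $\int G$ and the mountain-pass geometry are the systematic tools for excluding vanishing.
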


\begin{proof}
Firstly we observe that for any $v\in H^{s}(\R^{N})$ 
$$
I'(u_{n})v = I'_{\lambda_{n}}(u_{n})v + (\lambda_{n} -1) \int_{\R^{N}} f(x, u_{n}) v \, dx \rightarrow 0 
$$
and 
$$
I(u_{n})= I_{\lambda_{n}}(u_{n}) + (\lambda_{n} -1) \int_{\R^{N}} F(x, u_{n})  \, dx.
$$
Now we distinguish two cases:\\
First case: $\limsup_{n\rightarrow \infty} I_{\lambda_{n}}(u_{n})>0$. Then $\limsup_{n\rightarrow \infty} I(u_{n})>0$ and the thesis follows by Lemma \ref{lemma3.5}. \\
Second case: $\limsup_{n\rightarrow \infty} I_{\lambda_{n}}(u_{n})\leq 0$. 

Let us consider $t_{n}\in [0,1]$ such that
\begin{equation}\label{(3.8)}
I_{\lambda_{n}}(t_{n} u_{n}) = \max_{t\in [0,1]} I_{\lambda_{n}}(tu_{n}).
\end{equation}
Let $z_{n}=t_{n}u_{n}$ and observe that $\{z_{n}\}$ is bounded in $H^{s}(\R^{N})$. Since $I'_{\lambda_{n}}(z_{n}) z_{n} =0$ for any $n\in \N$, we have 
\begin{equation}\label{(3.9)}
\lambda_{n} \int_{\R^{N}} G(x, z_{n}) \, dx = I_{\lambda_{n}}(z_{n}) -\frac{1}{2} I'_{\lambda_{n}}(z_{n})z_{n} = I_{\lambda_{n}}(z_{n}).
\end{equation}
Proceeding as in the proof of Lemma \ref{lemma3.1} we can see that $I'_{\lambda_{n}}(u)u= ||u||^{2}_{H^{s}(\R^{N})} + o(||u||^{2}_{H^{s}(\R^{N})})$ as $||u||_{H^{s}(\R^{N})}\rightarrow 0$, uniformly in $n\in \N$. Then, being $I'_{\lambda_{n}}(u_{n})=0$, there exists $\alpha>0$ such that $||u_{n}||_{H^{s}(\R^{N})}\geq \alpha$ for all $n\in \N$. 

Putting together $\limsup_{n\rightarrow \infty} I_{\lambda_{n}}(u_{n})\leq 0$, Lemma \ref{lemma3.1}, (\ref{(3.8)}), (\ref{(3.9)}) and $\lambda_{n}\rightarrow 1$ we have
$$
\liminf_{n\rightarrow \infty} \int_{\R^{N}} G(x, z_{n}) \, dx= \liminf_{n\rightarrow \infty} I_{\lambda_{n}}(z_{n}) >0. 
$$ 
Then, Lemma \ref{lemma3.4} implies that $z_{n}$ (so $u_{n}$) does not vanish. Proceeding as in the proof of Lemma \ref{lemma3.5} we obtain the assertion. 

\end{proof}

Then,  taking into account Lemma \ref{lemma3.6}, it is enough to prove that $\{u_{n}\}\subset H^{s}(\R^{N})$ is bounded to conclude the proof of Theorem \ref{thm1}.
\begin{proof}(end of proof of Theorem \ref{thm1})
We argue by contradiction and we assume that $||u_{n}||_{H^{s}(\R^{N})}\rightarrow \infty$.

Let us consider the sequence
$$
w_{n}=\frac{u_{n}}{||u_{n}||_{H^{s}(\R^{N})}}.
$$
Then $||w_{n}||_{H^{s}(\R^{N})}=1$ and we can assume that $w_{n} \rightharpoonup w$ in $H^{s}(\R^{N})$. As a consequence either $w_{n}$ vanishes or it does not vanish.
We will prove that none of these alternatives occur and this gives a contradiction.

\begin{itemize}
\item Step 1: $w_{n}$ does not vanish. 
\end{itemize}

Proceeding as in the proof of Lemma \ref{lemma3.5} and by using the translation invariance of problem (\ref{P}), we can assume that $w_{n} \rightharpoonup w\neq 0$ in $H^{s}(\R^{N})$ and $w_{n} \rightarrow w$ a.e. in $\R^{N}$. 
Now we distinguish two cases.

Firstly we assume that $a<\infty$ in $(H4)$ and $K\in (0,a)$.
We prove that $w\neq 0$ satisfies the eigenvalue problem
\begin{equation*}
(-\Delta)^{s}w+Kw=aw \mbox{ in } \R^{N}
\end{equation*}
that is, for any $\varphi\in C^{\infty}_{0}(\R^{N})$
\begin{equation}\label{(3.11)}
\iint_{\R^{2N}}\frac{(w(x)-w(y))}{|x-y|^{N+2s}}(\varphi(x)-\varphi(y))dxdy+\int_{\R^{N}}Kw\varphi dx=\int_{\R^{N}}aw\varphi dx.
\end{equation}
This gives a contradiction since $(-\Delta)^{s}$ has no eigenvalue in $H^{s}(\R^{N})$.
To see this last fact, we can observe that if $\mu \in \R$ and $u\in H^{s}(\R^{N})$ satisfies $(-\Delta)^{s} u = \mu u$ in $\R^{N}$, by using the Pohozaev identity proved in \cite{CW}, we can deduce that 
\begin{equation*}
\frac{\mu N}{2} \int_{\R^{N}} u^{2} dx = \frac{N-2s}{2} \int_{\R^{N}} |k|^{2s} |\mathcal{F}u(k)|^{2} dk = \mu \frac{N-2s}{2} \int_{\R^{N}} u^{2} dx,
\end{equation*}
which necessarily implies that $u\equiv 0$.  \\
Now, we are going to prove (\ref{(3.11)}). Since $I'_{\lambda_{n}}(u_{n})=0$ we can see that  $w_{n}$ satisfies 
\begin{align*}
\iint_{\R^{2N}}\frac{(w_{n}(x)-w_{n}(y))}{|x-y|^{N+2s}}(\varphi(x)-\varphi(y))dxdy &+\int_{\R^{N}}Kw_{n}\varphi dx\\
&=\int_{\R^{N}}\lambda_{n}\frac{f(x,u_{n})}{u_{n}}w_{n}\varphi dx
\end{align*}
for any $\varphi\in C^{\infty}_{0}(\R^{N})$.
By using the fact that $w_{n} \rightharpoonup w$ in $H^{s}(\R^{N})$ we get
\begin{align*}
\iint_{\R^{2N}}&\frac{(w_{n}(x)-w_{n}(y))}{|x-y|^{N+2s}}(\varphi(x)-\varphi(y))dxdy+\int_{\R^{N}}Kw_{n}\varphi dx\\
&\rightarrow \iint_{\R^{2N}}\frac{(w(x)-w(y))}{|x-y|^{N+2s}}(\varphi(x)-\varphi(y))dxdy+\int_{\R^{N}}Kw\varphi dx
\end{align*}
for any $\varphi\in C^{\infty}_{0}(\R^{N})$.

To obtain (\ref{(3.11)}) we have to prove that 
\begin{equation}\label{3.18}
\int_{\R^{N}}\lambda_{n}\frac{f(x,u_{n})}{u_{n}}w_{n}\varphi dx\rightarrow \int_{\R^{N}}aw\varphi dx
\end{equation}
Firstly we show 
\begin{equation}\label{(3.14)}
\lambda_{n}\frac{f(x,u_{n})}{u_{n}}w_{n}\rightarrow aw \mbox{ a.e. in } \R^{N}.
\end{equation}
We distinguish when $w(x)=0$ and $w(x)\neq 0$ (without loss of generality we can suppose that $w\neq 0$ is defined everywhere in $\R^{N}$).

Fix $x\in \R^{N}$ such that $w(x)=0$. By using $(H1),(H3)$ and $(H4)$ we can see that there exists $C<\infty$ such that
\begin{equation}\label{(3.15)}
0\leq \frac{f(x,t)}{t}\leq C\mbox{ for all } t\geq 0 \mbox{ a.e. in } \R^{N}.
\end{equation}
Since $\lambda_{n}$ is bounded and $w_{n}\rightarrow w$ a.e. in $\R^{N}$, we have for such $x\in \R^{N}$
$$
0\leq \lambda_{n}\frac{f(x,u_{n}(x))}{u_{n}(x)}w_{n}(x)\leq \lambda_{n}Cw_{n}(x)\rightarrow 0=aw(x).
$$
Now, let  $x\in \R^{N}$ be such that $w(x)\neq 0$. Then $u_{n}(x)\rightarrow \infty$ and by using $(H4)$ and $\lambda_{n} \rightarrow 1$ we have
$$
\lambda_{n}\frac{f(x,u_{n}(x))}{u_{n}(x)}w_{n}(x)\rightarrow aw(x).
$$
Therefore, we have proved (\ref{(3.14)}).
At this point, we fix $\varphi\in C^{\infty}_{0}(\R^{N})$ and let $\Omega$ be a compact set such that $supp(\varphi)\subset \Omega$. Since $H^{s}(\Omega)$ is compactly embedded in $L^{1}(\Omega)$ we have $w_{n} \rightarrow w$ in $L^{1}(\Omega)$. By Lemma \ref{br} we deduce the existence of a function $h\in L^{1}(\Omega)$ such that $w_{n}\leq h$ a.e. in $\Omega$, and by using (\ref{(3.15)}) we get
$$
0\leq \lambda_{n}\frac{f(x,u_{n})}{u_{n}}w_{n} \leq C w_{n} \leq C h \mbox{ a.e. } x\in \Omega. 
$$
This last fact, (\ref{(3.14)}) and the Dominated Convergence Theorem imply (\ref{3.18}).

Secondly we assume that $a=\infty$ in $(H4)$.
Since $u_{n}$ solves weakly
\begin{equation}\label{(3.211)}
(-\Delta)^{s}u_{n}+Ku_{n}=\lambda_{n}f(x,u_{n}) \mbox{ in } \R^{N}
\end{equation}
we deduce that $w_{n}$ satisfies 
\begin{align}\label{eqwns}
\iint_{\R^{2N}}\frac{(w_{n}(x)-w_{n}(y))}{|x-y|^{N+2s}}(\varphi(x)-\varphi(y))dxdy &+\int_{\R^{N}}Kw_{n}\varphi dx \nonumber \\
&=\int_{\R^{N}}\lambda_{n}\frac{f(x,u_{n})}{u_{n}}w_{n}\varphi dx
\end{align}
for any $\varphi\in H^{s}(\R^{N})$.
Then, being $w_{n} \rightharpoonup w$ in $H^{s}(\R^{N})$, we get
\begin{align}\label{(3.191)}
\lim_{n\rightarrow \infty} &\int_{\R^{N}} \frac{f(x,u_{n})}{||u_{n}||_{H^{s}(\R^{N})}}\varphi dx\nonumber\\ 
&=\iint_{\R^{2N}}\frac{(w(x)-w(y))}{|x-y|^{N+2s}}(\varphi(x)-\varphi(y))dxdy+\int_{\R^{N}}Kw\varphi dx.
\end{align}
Taking $\varphi=w$ in (\ref{(3.191)}) we deduce that 
\begin{equation}\label{(3.20)}
\lim_{n\rightarrow \infty} \int_{\R^{N}}\frac{f(x,u_{n})}{||u_{n}||_{H^{s}(\R^{N})}}w dx=||w||_{H^{s}(\R^{N})}^{2}.
\end{equation}
Now, let $\Omega=\{x\in \R^{N}:w(x)\neq 0\}$. Since $a=\infty$ we have 
$$
\frac{f(x,u_{n})}{||u_{n}||_{H^{s}(\R^{N})}}w=\frac{f(x,u_{n})}{u_{n}}w_{n}w \rightarrow +\infty \mbox{ a.e. in } \R^{N}.
$$
Taking into account that $|\Omega|>0$ and by using Fatou's Lemma we obtain
\begin{equation*}
+\infty\leq \liminf_{n\rightarrow \infty} \int_{\R^{N}}\frac{f(x,u_{n})}{||u_{n}||_{H^{s}(\R^{N})}}w dx=||w||_{H^{s}(\R^{N})}^{2}<\infty,
\end{equation*}
that is a contradiction.

\begin{itemize}
\item Step 2: $w_{n}$ vanishes. 
\end{itemize}

As in the Step $1$ we have to consider two cases.
Assume that $a<+\infty$ in $(H4)$ and $(J1)$ hold. Since $u_{n}$ solves (\ref{(3.211)}) we can see that $w_{n}$ satisfies (\ref{eqwns}). Taking $\varphi= w_{n} $ in (\ref{eqwns}), and recalling that $||w_{n}||_{H^{s}(\R^{N})}=1$, we get
\begin{equation}\label{(3.22)}
1=\lim_{n\rightarrow +\infty} \int_{\R^{N}} \frac{f(x, u_{n})}{u_{n}} w_{n}^{2} dx. 
\end{equation}
Set 
$$
\Omega_{n}=\Bigl \{x\in \R^{N} : \frac{f(x,u_{n})}{u_{n}}\leq K-\frac{\delta}{2} \Bigr\},
$$
where $\delta$ is defined as in $(J1)$. Since $1= ||w_{n}||_{H^{s}(\R^{N})} = [w_{n}]_{H^{s}(\R^{N})}^{2} + K ||w_{n}||_{L^{2}(\R^{N})}^{2}$ we can see that 
\begin{align*}
\int_{\Omega_{n}} \frac{f(x,u_{n})}{u_{n}} w_{n}^{2} dx &\leq \Bigl(K-\frac{\delta}{2}\Bigr)  \int_{\Omega_{n}} w_{n}^{2} dx \leq \frac{1}{K} \Bigl(K-\frac{\delta}{2}\Bigr)
\end{align*}
which together with (\ref{(3.22)}) imply 
\begin{equation}\label{(3.23)}
\liminf_{n\rightarrow +\infty} \int_{\R^{N}\setminus \Omega_{n}} \frac{f(x,u_{n})}{u_{n}} w_{n}^{2} dx\geq \frac{\delta}{2K} >0. 
\end{equation}
Now, we claim to prove that 
\begin{equation}\label{(3.24)}
\limsup_{n\rightarrow +\infty} |\R^{N}\setminus \Omega_{n}|= +\infty. 
\end{equation}
We argue by contradiction and we suppose that 
\begin{equation}\label{(3.25)}
\limsup_{n\rightarrow +\infty} |\R^{N}\setminus \Omega_{n}|< +\infty. 
\end{equation}
Taking into account (\ref{(3.15)}), (\ref{(3.23)}), (\ref{(3.25)}) and the fact that $w_{n}$ vanishes, we deduce that
\begin{align*}
0<\lim_{n\rightarrow +\infty} \int_{\R^{N}\setminus \Omega_{n}} \frac{f(x,u_{n})}{u_{n}} w_{n}^{2} dx \leq C \lim_{n\rightarrow +\infty} \int_{\R^{N}\setminus \Omega_{n}} w_{n}^{2} dx =0
\end{align*}
that is a contradiction.
Now, by using (\ref{(3.7)}) and the fact that $G(x, t)\geq 0$ for any $t\geq 0$ by $(J1)$, we have 
\begin{align*}
c_{1} &\geq \int_{\R^{N}} G(x, u_{n}) \, dx \\
&= \int_{\Omega_{n}} G(x, u_{n}) \, dx+  \int_{\R^{N}\setminus \Omega_{n}} G(x, u_{n}) \, dx \\
&\geq  \int_{\R^{N}\setminus \Omega_{n}} G(x, u_{n}) \, dx.
\end{align*}
But this gives a contradiction because of $G(x, u_{n}) \geq \delta$ a.e. $x\in \R^{N}\setminus \Omega_{n}$ and (\ref{(3.24)}). 
Now we assume that $a= \infty$ in $(H4)$ and $(J2)$ hold. Let $z_{n}$ be the sequence introduced in Lemma \ref{lemma3.6}. 

We claim to prove that 
\begin{equation}\label{(3.29)}
\lim_{n\rightarrow +\infty} I_{\lambda_{n}} (z_{n}) =+\infty. 
\end{equation} 
We recall that $||u_{n}||_{H^{s}(\R^{N})}\rightarrow +\infty$. Assume by contradiction that 
\begin{equation}\label{(3.30)}
\liminf_{n\rightarrow \infty} I_{\lambda_{n}}(z_{n}) \leq M < \infty. 
\end{equation}
Consider the following sequence 
$$
\xi_{n} = \sqrt{4M} \frac{u_{n}}{||u_{n}||_{H^{s}(\R^{N})}}= \sqrt{4M} w_{n} . 
$$
Then, $\xi_{n}$ is bounded in $H^{s}(\R^{N})$, $\xi_{n}$ vanishes and by Lemma \ref{fqt}  
\begin{equation}\label{convergence}
\xi_{n}\rightarrow 0 \mbox{ in } L^{q}(\R^{N}) , \mbox{ for any } q\in \Bigl(2,\frac{2N}{N-2s}\Bigr).  
\end{equation}
Thus, by (\ref{(3.2)}), Theorem \ref{ce} and (\ref{convergence}) we deduce
$$
\int_{\R^{N}} F(x, \xi_{n}) \, dx \rightarrow 0. 
$$
So we get, for $n\in \N$ large enough, 
\begin{equation*}
\label{(3.31)}
 I_{\lambda_{n}}(z_{n})  \geq I_{\lambda_{n}}(\xi_{n}) = 2M - \lambda_{n} \int_{\R^{N}} F(x, \xi_{n}) \, dx \geq M
\end{equation*} 
which is incompatible with (\ref{(3.30)}). Now, by using $I'_{\lambda_{n}}(z_{n})z_{n}=0$ for any $n\in \N$ and (\ref{(3.29)}), we obtain 
\begin{equation*}
\label{(3.32)}
\lambda_{n} \int_{\R^{N}} G(x, z_{n}) \, dx= I_{\lambda_{n}}(z_{n}) - \frac{1}{2} I'_{\lambda_{n}}(z_{n})z_{n} =  I_{\lambda_{n}}(z_{n})\rightarrow +\infty . 
\end{equation*}
But this is impossible because $(J2)$ and (\ref{(3.7)}) give  
\begin{equation*}
\int_{\R^{N}} G(x, z_{n}) \, dx \leq D \int_{\R^{N}} G(x, u_{n}) \, dx \leq Dc_{1}. 
\end{equation*}

\end{proof}

\end{document}